\documentclass[11pt]{amsart}
\newcommand\version{public}

\usepackage{fullpage,setspace}
\usepackage{enumerate}   %
\usepackage{url}
\usepackage{enumerate}
\usepackage{amsmath,amssymb,amscd,mathrsfs,latexsym}
\usepackage{mathdots}
\newcommand{\choosefont}[1]{\usepackage{#1}}  %

\usepackage{ifthen}     %

\newcommand\pubpri[2]{%
\ifthenelse{\equal{\version}{public}}%
{{#1}}%
{\marginpar{\scshape\small Pubpri Alert}{#2}}}
\newcommand\pubprinoalert[2]{%
\ifthenelse{\equal{\version}{public}}%
{{#1}}%
{#2}}
\newcommand\ignore[1]{}
\usepackage{color}
\providecommand\wantcolor{yes}   %
\definecolor{backgroundyellow}{cmyk}{.2,.1,.8,.2}
\definecolor{backgroundblue}{rgb}{0,0,1}
\definecolor{backgroundred}{rgb}{1,0,0}
\definecolor{backgroundmagenta}{cmyk}{0,1,0,0}
\ifthenelse{\equal{\wantcolor}{no}}{\renewcommand\color[1]{}}{}

\newcommand\mysubsubsection[1]{%
		\subsubsection{\sffamily\upshape\mdseries #1}}

\newcommand\mysss{\mysubsubsection}

\usepackage{chngcntr}
\counterwithin{figure}{section}
\providecommand{\theoremnumbering}{section}

\ifthenelse{\equal{\theoremnumbering}{section}}{\newtheorem{annotation}{Annotation}[section]}%
{\ifthenelse{\equal{\theoremnumbering}{subsection}}{\newtheorem{annotation}{Annotation}[subsection]}{\newtheorem{annotation}{Annotation}}}
\newtheorem{theorem}[annotation]{%
		Theorem}
\newtheorem{lemma}[annotation]{%
		Lemma}
\newtheorem{definition}[annotation]{%
		Definition}
\newtheorem{corollary}[annotation]{%
		Corollary}
\newtheorem{proposition}[annotation]{%
		Proposition}
\newtheorem{conjecture}[annotation]{%
		Conjecture}
\newtheorem{example}[annotation]{%
		Example}
\newcommand\bexample{\begin{example}\begin{rm}}
\newcommand\eexample{\end{rm}\hfill$\Box$\end{example}}
\newtheorem{examplenobox}[annotation]{%
		Example}
\newcommand\bexamplenobox{\begin{examplenobox}\begin{rm}}
\newcommand\eexamplenobox{\end{rm}\end{examplenobox}}
\newtheorem{exercise}[annotation]{%
		Exercise}
\newcommand\bexercise{\begin{exercise}\begin{rm}}
\newcommand\eexercise{\end{rm}\end{exercise}}
\newtheorem{notation}[annotation]{%
		Notation}
\newcommand\bnotation{\begin{notation}\begin{rm}}
\newcommand\enotation{\end{rm}\end{notation}}

\newtheorem{remark}[annotation]{%
		Remark}
\newcommand\bremark{\begin{remark}%
\begin{upshape}}
\newcommand\eremark{\end{upshape}%
\end{remark}}
\newenvironment{remark*}{%
\par\noindent{\scshape 
  Remark: }\begin{rm}}{\hfill\end{rm}\newline} 
\newcommand\bremarkstar{\begin{remark*}}
\newcommand\eremarkstar{\end{remark*}}
\newcommand\bdefn{\begin{definition}%
\begin{upshape}}
\newcommand\edefn{\end{upshape}%
\end{definition}}
\newtheorem{caveat}[annotation]{%
		Caveat}
\newcommand\bcaveat{\begin{caveat}%
\begin{upshape}}
\newcommand\ecaveat{\end{upshape}%
\end{caveat}}
\newenvironment{caveatstar}{%
\par\noindent{\scshape\bfseries
  Caveat: }\begin{rm}}{\end{rm}\newline} 
\newcommand\bcaveatstar{\begin{caveatstar}}%
\newcommand\ecaveatstar{\end{caveatstar}}
\newenvironment{myproof}{%
\par\noindent{\scshape 
  Proof: }\begin{rm}}{\hfill$\Box$\end{rm}\newline} 
\newcommand\bmyproof{\begin{myproof}}
\newcommand\emyproof{\end{myproof}}
\newenvironment{myproofnobox}{%
\par\noindent{\scshape Proof: }\begin{rm}}{\end{rm}\hfill\newline}
\newcommand\bmyproofnobox{\begin{myproofnobox}}
\newcommand\emyproofnobox{\end{myproofnobox}}
\newenvironment{solution}{%
\par\noindent{\scshape Solution: }\begin{rm}}{\hfill$\Box$\end{rm}\newline}
\newenvironment{solutionnobox}{%
\par\noindent{\scshape Solution: }\begin{rm}}{\end{rm}}
\newcommand\bsolution{\begin{solution}\begin{rm}}
\newcommand\esolution{\end{rm}\end{solution}}
\newcommand\bsolutionnobox{\begin{solutionnobox}\begin{rm}}
\newcommand\esolutionnobox{\end{rm}\end{solutionnobox}}
\newcommand\bthm{\begin{theorem}}
\newcommand\ethm{\end{theorem}}
\newcommand\bcor{\begin{corollary}}
\newcommand\ecor{\end{corollary}}
\newcommand\blemma{\begin{lemma}}
\newcommand\elemma{\end{lemma}}
\newcommand\bprop{\begin{proposition}}
\newcommand\eprop{\end{proposition}}
\newcommand\beqn{\begin{equation}}
\newcommand\eeqn{\end{equation}}
\newcommand\beqnstar{\begin{equation*}}
\newcommand\eeqnstar{\end{equation*}}
\numberwithin{equation}{section}
\newcommand\mtitle[1]%
{\marginpar{\sffamily\raggedright\upshape\small #1}}

\providecommand\finalized{yes}
\ifthenelse{\equal{\finalized}{yes}}{}{\marginparwidth=2cm}
\ifthenelse{\equal{\finalized}{yes}}%
{\newcommand\mylabel[1]{\label{#1}}}%
{\newcommand\mylabel[1]{\label{#1}\marginpar{[{\ttfamily\upshape\tiny #1}]}}}
\ifthenelse{\equal{\finalized}{yes}}%
{\newcommand\checked[1]{}}%
{\newcommand\checked[1]{\marginpar{[{\ttfamily\upshape\tiny CHECKED: #1}]}}}
\ifthenelse{\equal{\finalized}{yes}}%
{\newcommand\spellchecked[1]{}}%
{\newcommand\spellchecked[1]{\marginpar{[{\ttfamily\upshape\tiny SPELLCHECKED: #1}]}}}
\providecommand\version{public}   %
\ifthenelse{\equal{\version}{public}}%
{\newcommand\mcomment[1]{}}%
{\newcommand\mcomment[1]{\marginpar{{\raggedright\sffamily\upshape\small
\begin{spacing}{0.75} #1\end{spacing}}}}}
\ifthenelse{\equal{\version}{public}}%
{\newcommand\fcomment[1]{}}%
{\newcommand\fcomment[1]{\footnote{#1}}}
\ifthenelse{\equal{\version}{public}}%
{\newcommand\comment[1]{}}%
{\newcommand\comment[1]{{\small #1}}}
\usepackage{graphicx}
\graphicspath{ {diagram/} }

\newcounter{diagram}
\numberwithin{diagram}{section}

\newenvironment{diagram}
{\stepcounter{diagram}\par\smallskip\noindent\begin{minipage}{\linewidth}\centering}
	{\par Diagram~\thediagram\end{minipage}\par\smallskip}

\choosefont{mathpazo}
\usepackage{youngtab}
\usepackage{amsmath,amsthm,xspace,xcolor}
\usepackage{cite,pgf,pgfarrows,pgfnodes,pgfautomata,pgfheaps}
\usepackage{tikz}
\usepackage{placeins}
\usepackage{hyperref}

\newcommand\algA{\lieg(A)}

\newcommand\lieg{\mathfrak{g}}

\newcommand\lieh{\mathfrak{h}}

\newcommand\pisys{$\pi$-system\xspace}
\newcommand\pisystem{\pisys}
\newcommand\pisystems{$\pi$-systems\xspace}
\newcommand{\qsig}[1][\Sigma]{q_{{}_{#1}}}

\newcommand{\Xsh}[1][X]{X_{\mathrm{short}}}
\newcommand{\Xlong}[1][X]{X_{\mathrm{long}}}
\newcommand{\bbar}[1][\beta]{\overline{#1}}

\newcommand{\roots}[1][]{\Delta_{#1}}
\newcommand{\reroots}[1][]{\Delta^{re}_{#1}}
\newcommand{\imroots}[1][]{\Delta^{im}_{#1}}

\newcommand{\rerootsh}{\reroots[\mathrm{short}]}
\newcommand{\rerootlong}{\reroots[\mathrm{long}]}
\newcommand{\Ext}{\mathrm{Ext}}

\newcommand{\form}[2]{\left( #1 \mid #2 \right)}
\newcommand{\aform}[2]{\langle\, #1, #2 \,\rangle}

\newcommand{\integers}{\mathbb{Z}}
\newcommand{\reals}{\mathbb{R}}
\newcommand{\complex}{\mathbb{C}}

\newcommand{\be}{\begin{enumerate}}
\newcommand{\ee}{\end{enumerate}}

\newcommand{\gcmset}{\mathbb{G}}

\newcommand{\al}{\alpha}

\begin{document}

\author{K. N. Raghavan}
\address{Krea University, Sri City, A.P. 517646}
\email{raghavan.komaranapuram@krea.edu.in}

\author{Krishanu Roy}
\address{SRM University AP, Andhra Pradesh, India, 522502}
\email{krishanu.r@srmap.edu.in}

\author{Sankaran Viswanath}
\address{The Institute of Mathematical Sciences, A CI of Homi Bhabha National Institute, Chennai 600113, India}
\email{svis@imsc.res.in}

\thanks{KNR and SV acknowledge support from DAE under a XII plan project. KR is partially supported by ISF grant no. 1221/17}

\title{On ${\boldsymbol\pi}$-systems of symmetrizable Kac-Moody algebras}

\thanks{}

\begin{abstract}
Given a symmetrizable Kac-Moody algebra $\lieg$, we study its \pisystems, which are subsets of real roots, the pairwise
differences of whose elements are not roots. Such systems arise as simple systems of regular subalgebras of $\lieg$, and were originally studied by Dynkin, Morita and Naito. We show that the binary relation introduced by Morita defines a partial order on the set of $\lieg$ of finite, untwisted affine or hyperbolic type. We also formulate general principles for constructing $\pi$-systems as well as for finding forbidden diagrams that cannot occur as Dynkin diagrams of \pisystems of a given $\lieg$. Among other applications, we use this to determine the set of maximal  hyperbolic Dynkin diagrams in ranks $3$-$10$ relative to the Morita partial order.
\end{abstract}
\keywords{\pisystem, Kac-Moody algebras, partial order}
\subjclass[2010]{17B22 (17B67)}
\maketitle

\section{Introduction}

Let $\lieg = \lieg(A)$ be the Kac-Moody algebra associated with a symmetrizable generalized Cartan matrix (GCM) $A$. Let $\Delta$ be its set of roots and $\Delta^{\mathrm{re}}$ be the set of real roots.
A \pisystem of $\lieg$ is a subset $\Sigma \subset \Delta^{\mathrm{re}}$ satisfying the property that $\alpha - \beta \not\in \roots$ for all $\alpha \neq \beta \in \Sigma$.

These were first studied by Dynkin for finite-dimensional $\lieg$ and later by Morita \cite{morita} and Naito \cite{naito} in general (see also \cite{feingold-nicolai}). Our previous work \cite{CRRRV} studied the Weyl group action on \pisystems and showed that the number of orbits is finite in various cases of interest.

In this paper, we study the binary relation  $\preceq$ on the set $\gcmset$ of symmetrizable generalized Cartan matrices (identifying two matrices which differ only by a simultaneous reordering of rows and columns). This was first introduced by Morita: given $A,B \in \gcmset$, we say $B \preceq A$ if there is a linearly independent \pisystem of type $B$ in the Kac-Moody algebra $\algA$ (see \S\ref{sec:prelims} for definitions). We establish that $\preceq$ defines a partial order on the set of GCMs of finite, untwisted affine or hyperbolic type (Proposition~\ref{prop:mainprop}), and conjecture that this holds on all of $\gcmset$.

We prove a very general theorem (Theorem~\ref{thm:pisys-mod-d}), from which we deduce necessary conditions for a pair $(A,B)$ to satisfy $B \preceq A$ under various hypotheses on $A$. This in turn allows us to find {\em forbidden diagrams} of \pisystems, i.e., those which cannot occur as subdiagrams of Dynkin diagrams of \pisystems of $A$. We give many examples and applications of this in \S\ref{Necessary}. 

We also prove some general principles for explicit construction of \pisystems; these may be viewed as generalizations of principles obtained in \cite{svis-e10} for simply-laced diagrams. As an application of these constructions and the forbidden diagram analysis, we show how to find the list of all maximal Dynkin diagrams in the set of all hyperbolic Dynkin diagrams. An approach to the poset of hyperbolic type GCMs under $\preceq$ is contained in \cite{felikson-tumarkin,tumarkin}, but proceeds via the associated Weyl groups.

\section{Preliminaries}\label{sec:prelims}

We recall the relevant notation from \S 2 of \cite{CRRRV}.
  \subsection{}An integer matrix $A=(a_{ij})$ of size $n\times n$, where $n$ is a positive integer, is called a {\em generalized Cartan matrix\/}, {\em GCM\/} for short, if the following conditions are satisfied:
\begin{enumerate}
	\item $a_{ii}=2$ for all $1\leq i\leq n$
	\item $a_{ij}\leq 0$ whenever $1\leq i, j\leq n, i \neq j$
	\item $a_{ij}=0$ if %
	$a_{ji}=0$ for $1\leq i, j\leq n$
\end{enumerate}

Given a GCM $A$ of size $n$,  we let $\algA$ denote the {\em Kac-Moody Lie algebra\/} associated to $A$ \cite[\S1.3]{kac}, with Cartan subalgebra $\lieh(A)$ and Chevalley generators $e_i, f_i$ for $1 \leq i \leq n$.  Let $\alpha_i(A), 1 \leq i \leq n$ denote the simple roots of $\algA$ and let $Q(A)$ be its root lattice. We use terminology and notation as in the early chapters of~\cite{kac} without any further comment.

\bdefn\mylabel{d:pisys}  Let $A$ be a GCM.
A {\em \pisystem} in $A$ is a finite collection of distinct real roots $\{\beta_i\}_{i=1}^m$ of $\lieg(A)$ such that $\beta_i-\beta_j$ is not a root for any $1\leq i\neq j\leq m$.
\edefn

Let $A$ be a GCM, and $\Sigma = \{\beta_i\}_{i=1}^m$ be a \pisystem in $A$.
Then the matrix \[ M(\Sigma) := \left[ \aform{\beta_i^\vee}{\beta_j} \right]_{i,j=1}^m \] is a GCM. Here, we let $\alpha^\vee$ denote the coroot corresponding to $\alpha$ when $\alpha$ is a real root of $\algA$. 
\bdefn
We call $B:=M(\Sigma)$ the {\em type} of $\Sigma$, and refer to $\Sigma$ as a {\em \pisystem of type $B$ in $A$\/}.
\edefn

\subsection{Symmetrizable GCMs and \pisystems}\label{sec:symm-gcm}
An $n\times n$ GCM $A$ is {\em symmetrizable\/} if there exists a diagonal $n\times n$ matrix $D$ with positive rational diagonal entries such that $DA$ is symmetric. %
Let $\Sigma =\{\beta_i: 1 \leq i \leq m\}$ be a \pisystem of type $B$ in $A$. We note that if $A$ is a symmetrizable GCM, then so is $B$. Fix a choice of diagonal matrix $D$ which symmetrizes $A$, and let  $\form{\cdot}{ \cdot}$ denote the corresponding symmetric bilinear form  on $\lieh(A)$ \cite[\S 2.1]{kac}, such that:
\begin{equation} \label{eq:form-def} \form{\alpha_i^{\vee}(A)}{\alpha_j^{\vee}(A)} = \,a_{ij}/D_{jj} \end{equation}

 Since the bilinear form $\form{\cdot}{ \cdot}$ is non-degenerate on $\lieh(A)$, we have an isomorphism $\nu : \lieh(A) \rightarrow \lieh^*(A)$ defined by
 \begin{equation}
 	\nu(h)(h_1)=\form{h}{h_1},  \end{equation} for $ h,h_1\in \lieh(A)$
 and the induced bilinear form $\form{\cdot}{ \cdot}$ on $\lieh^*(A)$ such that 
 \begin{equation} \label{eq:form-def} \form{\alpha_i(A)}{\alpha_j(A)} =D_{ii} \,a_{ij} \end{equation}

Since the $\beta_i$ are real roots of $\algA$, we know by \cite[Chapter 5]{kac} that:
\[b_{ij} = \langle \beta_i^\vee,\beta_j\rangle = \frac{2\form{\beta_i}{\beta_j}}{ \form{\beta_i}{\beta_i} }\]
Thus, $D' = \mathrm{diag}( \form{\beta_i}{\beta_i}/2)$ is a diagonal matrix with positive rational entries that symmetrizes $B$.  This choice of symmetrization defines a symmetric bilinear form on $Q(B)\otimes_\integers \complex$. As in equation~\eqref{eq:form-def} above, this is given by $\form{\alpha_i(B)}{\alpha_j(B)} = D'_{ii}\, b_{ij} = \form{\beta_i}{\beta_j}$. In other words, given the compatible choices of symmetrizations $(D,D')$ as above, the $\complex$-linear map
\begin{equation} \label{eq:def-q}
	Q(B) \otimes_\integers \complex \to Q(A) \otimes_\integers \complex, \;\;\;\;\alpha_i(B) \mapsto \beta_i \text{ for } 1 \leq i \leq m
\end{equation}
is form preserving.

\section{The relation $\preceq$}\label{sec:partial-order}
Consider the set $\gcmset$ of symmetrizable GCMs (of all sizes). We identify two such GCMs if they are equal upto a simultaneous permutation of rows and columns.

For $A, B \in \gcmset$, we define the relation $B \preceq A$ if there is a {\em linearly independent} \pisystem of type $B$ in $A$. Clearly this relation is reflexive. By corollary 2.14 in \cite{CRRRV} this relation is transitive. We however do not know if this relation is anti-symmetric.

\subsection{} In this section, we derive some key properties of $\preceq$. Later, in \S\ref{sec:fah}, we use these to establish its anti-symmetry in many important cases, in particular, when one of the matrices is of finite, untwisted affine or hyperbolic type.

\begin{lemma}\label{lem:det-div}
  Let $A$ be an $n \times n$ symmetrizable GCM. Let $\{\alpha_i\}_{i=1}^n$ be the simple roots of $\lieg(A)$ and $\alpha^\vee_i$ be the corresponding coroots. Let  $\{\beta_i\}_{i=1}^n \subset Q(A)$ and $\{\gamma^\vee_i\}_{i=1}^n \subset Q^\vee(A)$ be any $n$-element subsets of the root and coroot lattices respectively. Consider the integer matrix:
  $B = \left[ \aform{\gamma^\vee_i}{\beta_j} \right]_{ij}$.
  Then:
  \be
\item $\det A$ divides $\det B$ (in $\integers$).
\item Further if $A, B$ are invertible with $|\det A| = |\det B|$, then $\{\beta_i\}_{i=1}^n$ and $\{\gamma_i^\vee\}_{i=1}^n$ form $\integers$-bases of $Q(A)$ and $Q^\vee(A)$ respectively.
\ee
\end{lemma}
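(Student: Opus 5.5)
Write each $\beta_j$ and $\gamma_i^\vee$ in the simple bases: $\beta_j = \sum_k p_{kj}\alpha_k$ and $\gamma_i^\vee = \sum_l r_{il}\alpha_l^\vee$, with integer matrices $P=(p_{kj})$ and $R=(r_{il})$. This is possible precisely because $\{\beta_j\}\subset Q(A)$ and $\{\gamma_i^\vee\}\subset Q^\vee(A)$. By bilinearity of the pairing and $\aform{\alpha_l^\vee}{\alpha_k} = a_{lk}$,
\[ \aform{\gamma_i^\vee}{\beta_j} = \sum_{l,k} r_{il}\, a_{lk}\, p_{kj}, \qquad \text{so} \qquad B = R\,A\,P. \]
All three matrices are $n\times n$ integer matrices. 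Multiplicativity of the determinant gives $\det B = \det R \cdot \det A \cdot \det P$ with $\det R, \det P \in \integers$, which is part (1). Symmetrizability is not even needed for this step.

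For part (2), suppose $A$ and $B$ are invertible and $|\det A| = |\det B|$. Since $\det A \neq 0$, we may cancel to get $|\det R|\,|\det P| = 1$. As both determinants are integers, $\det R = \pm 1 = \det P$. So $P$ and $R$ lie in $GL_n(\integers)$, and their inverses are integer matrices. Hence each $\alpha_k$ is an integer combination of the $\beta_j$, and each $\alpha_l^\vee$ is an integer combination of the $\gamma_i^\vee$. Thus $\{\beta_j\}$ spans $Q(A)$ over $\integers$. It is linearly independent because $P$ is invertible and the $\alpha_k$ are linearly independent in $\lieh^*(A)$. So it is a $\integers$-basis of $Q(A)$, and likewise $\{\gamma_i^\vee\}$ is a $\integers$-basis of $Q^\vee(A)$.

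There is no genuine obstacle here. The only point needing care is the bookkeeping of index order, so that $B$ factors as $RAP$ rather than as some transpose. Even a transpose would not affect the determinants, so the argument is robust to that. One should also note that $Q(A)$ and $Q^\vee(A)$ are free $\integers$-modules with bases the simple roots and simple coroots, which holds since these are linearly independent by construction of the realization of $A$ in \cite{kac}.
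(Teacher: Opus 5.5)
Your proof is correct and follows essentially the same route as the paper. The paper also factors $B = UAV^T$ through the coefficient matrices of the $\gamma_i^\vee$ and $\beta_j$ in the simple coroot and root bases, takes determinants for (1), and for (2) concludes that both coefficient matrices lie in $\mathrm{GL}_n(\integers)$.
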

\bmyproof
We write:
\begin{align}
  \gamma^\vee_i &= \sum_{k=1}^n u_{ik} \, \alpha^\vee_k \label{eq:coroot}\\
  \beta_j &= \sum_{\ell=1}^n v_{j\ell} \, \alpha_\ell \label{eq:root}
\end{align}
where $u_{ik}, v_{j\ell}$ are integers. Using the equations above, we compute:
\[ B = U A V^T \]
where $U = \left[ u_{ij} \right]$ and $V = \left[ v_{ij} \right]$ are integer matrices. Taking determinants, we obtain $\det B = \det U \det V \det A$, proving the first assertion. For the second assertion, the given condition implies $|\det U| = |\det V| =1$, i.e., $U$ and $V$ are in $\mathrm{GL}_n(\integers)$. This is clearly equivalent to what needs to be shown.
\emyproof

\begin{proposition}\label{prop:lemcor}
  Let $A, B \in \gcmset$ have sizes $n, m$ respectively and let $B \preceq A$.  Then:
  \begin{enumerate}
  \item $m \leq n$.
  \item If $m=n$, then $\det B =k \det A$ for some integer $k \geq 1$.
  \item If $m=n$, then the symmetrizations of $A$ and $B$ have the same signature.
  \end{enumerate}
\end{proposition}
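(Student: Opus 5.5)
Let $\Sigma=\{\beta_i\}_{i=1}^m$ be a linearly independent \pisystem of type $B$ in $A$. Each $\beta_i$ is a real root, so it lies in $Q(A)$, and its coroot $\beta_i^\vee$ lies in $Q^\vee(A)$. Part (1) is then immediate. The $\beta_i$ are linearly independent elements of $Q(A)\otimes_\integers\complex$, a space of dimension $n$, so $m\le n$.

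For part (2), take $m=n$ and apply Lemma~\ref{lem:det-div}(1) with $\gamma_i^\vee=\beta_i^\vee$ and the given $\beta_j$. The matrix $[\aform{\beta_i^\vee}{\beta_j}]$ is exactly $B=M(\Sigma)$, so $\det A$ divides $\det B$ and $\det B=k\det A$ for some integer $k$. The issue is the sign, i.e.\ showing $k\ge 1$. Here I would use the symmetrizations: by \S\ref{sec:symm-gcm}, $B=D'^{-1}S_B$ and $A=D^{-1}S_A$, where $S_A=DA$ and $S_B=D'B$ are the symmetric Gram matrices of the form on the $\alpha_i(A)$ and on the $\beta_i$ respectively. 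Write $\beta_j=\sum_\ell v_{j\ell}\alpha_\ell$ as in the lemma. Since the map \eqref{eq:def-q} is form preserving, $S_B=VS_AV^T$, hence $\det S_B=(\det V)^2\det S_A$. Because $D,D'$ have positive diagonal entries, $\det A$ and $\det S_A$ have the same sign, and likewise $\det B$ and $\det S_B$. Since $V$ is invertible by linear independence, $(\det V)^2>0$. So if $\det A\neq 0$, then $\det B$ and $\det A$ have the same sign and $k\ge 1$. If $\det A=0$, then $\det B=0$ too, and we may take $k=1$.

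For part (3), the identity $S_B=VS_AV^T$ with $V\in \mathrm{GL}_n(\mathbb{Q})$ shows that $S_B$ and $S_A$ are congruent over $\mathbb{Q}$, and hence over $\reals$. Sylvester's law of inertia then gives equal signatures, counting the zero eigenvalues as well. The main point needing care is the form compatibility $\form{\beta_i}{\beta_j}=(S_B)_{ij}$ with the chosen $D'$. This is exactly what \S\ref{sec:symm-gcm} established, and linear independence is what makes $V$ invertible. Nothing else looks like an obstacle.
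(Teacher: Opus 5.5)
Your proof is correct and follows essentially the paper's route. The paper also gets divisibility from Lemma~\ref{lem:det-div}, and gets the sign of $k$ from positive diagonal factors: your congruence $D'B = V(DA)V^T$ is the paper's relation between the coefficient matrices $U$ and $V$ through positive diagonal matrices, repackaged, and your Sylvester argument is the paper's observation that $\reals\Sigma=\reals S$. Your separate treatment of $\det A=0$, where the paper's appeal to integrality of $k$ does not literally apply, is a small but welcome tightening.
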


\begin{proof}
Let $S = \{\alpha_i\}_{i=1}^n$ denote the set of simple roots of $\lieg(A)$. 
The hypothesis implies that there is a linearly independent \pisystem $\Sigma$ of type $B$ in $A$. The linear independence of $\Sigma$ clearly implies that $m \leq n$. Now if $m=n$, let $\Sigma = \{\beta_i\}_{i=1}^n$. In Lemma~\ref{lem:det-div}, we set $\gamma^\vee_i:=\beta^\vee_i = 2 \nu^{-1}(\beta_i)/ |\beta_i|^2$ (i.e, the corresponding coroots). The linear independence of $\Sigma$ ensures that the matrices $U, V$ of Lemma~\ref{lem:det-div} are invertible over $\reals$. Further,  from equations \eqref{eq:coroot} and \eqref{eq:root}, we conclude that $v_{ik} = u_{ik}|\beta_i|^2/|\alpha_k|^2$\cite[Eq. (5.1.1)]{kac}. Thus the matrices $U, V$ are related by $V = D_1 U D_2$ where $D_1, D_2$ are the diagonal matrices with entries $|\beta_i|^2$ and $|\alpha_k|^2$ respectively. Since these entries are positive rationals, we conclude that $\det V = p \det U$ for some positive rational $p$, and thus $\det B = k \det A$ where $k = p(\det U)^2 >0$. Since $k$ is known to be an integer from Lemma~\ref{lem:det-div}, this proves the second assertion. For the third assertion, we observe that the signature of the form $\form{\cdot}{\cdot}$ on the $\reals \Sigma$ (resp. $\reals S$) is exactly the signature of the symmetrization of $B$ (resp. $A$). But  since $m=n$, we have $\reals\Sigma = \reals S$.
\end{proof}

\begin{corollary} \label{cor:cor-anti}
Let  $A, B \in \gcmset$ be indecomposable GCMs such that $A \preceq B$ and $B \preceq A$. Then they have the same sizes, same types (Finite/Affine/Indefinite) and $\det A = \det B$. 
\end{corollary}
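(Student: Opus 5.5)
Let $A$ have size $n$ and $B$ have size $m$. The plan is to apply Proposition~\ref{prop:lemcor} in both directions and then read off the type from the determinant and the signature.

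\textbf{Sizes and determinants.} By part (1) of Proposition~\ref{prop:lemcor}, $B\preceq A$ gives $m\le n$ and $A\preceq B$ gives $n\le m$, so $m=n$. Part (2) then gives $\det B = k\det A$ and $\det A = k'\det B$ for positive integers $k,k'$. If $\det A=0$, then $\det B=0$ as well and there is nothing to prove. Otherwise $kk'=1$, so $k=k'=1$ and $\det A=\det B$.

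\textbf{Types.} By part (3), the symmetrizations of $A$ and $B$ have the same signature. I will use the standard trichotomy for an indecomposable symmetrizable GCM \cite[Ch.~4]{kac}: the type is determined by the symmetrized matrix $DA$.
\begin{itemize}
\item Finite type holds iff $DA$ is positive definite.
\item Affine type holds iff $DA$ is positive semidefinite of corank exactly $1$.
\item Otherwise the type is indefinite.
\end{itemize}
The signature records the numbers of positive, negative and zero eigenvalues. Since these agree for $A$ and $B$, positive definiteness, positive semidefiniteness and the corank all coincide. Hence $A$ and $B$ have the same type.

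\textbf{Main point to justify.} The step needing care is that "signature" in Proposition~\ref{prop:lemcor}(3) includes the nullity, not only the numbers of positive and negative eigenvalues. In the proof of that proposition, the forms on $\reals\Sigma=\reals S$ are literally the same form in two bases, so they are congruent. Congruent forms have the same nullity as well, which settles this point.
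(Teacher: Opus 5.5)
Your proof is correct and follows the paper's route: the paper gives no separate proof and treats the corollary as an immediate consequence of Proposition~\ref{prop:lemcor}, which you apply in both directions to get equal sizes, $\det A=\det B$, and (via the definite/semidefinite characterization of types) equal types. Your concern about nullity is harmless but unnecessary: once $m=n$, equal numbers of positive and negative eigenvalues already force the nullities to agree.
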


\begin{remark}
Given $A, B \in \gcmset$ with $B \preceq A$, it is natural to ask if $B^T \preceq A^T$ ? However, this turns out to be false even in finite type, as can be seen from Dynkin's tables\cite[Tables 9, 11]{dynkin}. We quote one example: $C_2 \oplus C_2 \preceq C_4$, but $B_2 \oplus B_2 \not\preceq B_4$. To see this, recall that the root system $C_4$ may be realized as the set of vectors of norm 1 or 2  in the rank 4 lattice $\oplus_{i=1}^4 \integers \epsilon_i$ where $\epsilon_i$ are the standard orthonormal basis of $\reals^4$\cite[\S 12.1]{hum}. We now observe that $\{\epsilon_1 - \epsilon_4, \epsilon_4\} \cup \{\epsilon_2-\epsilon_3, \epsilon_3\}$ defines a \pisystem of type $C_2 \oplus C_2$ in $C_4$.  But in case of the dual root systems, the short simple roots of $B_2 \oplus B_2$ form a subdiagram of type $A_1 \times A_1$, and Lemma~\ref{lem:da2-hyp} below implies that $B_2 \oplus B_2 \not\preceq B_4$.
\end{remark}

\noindent
{\bf{Examples:}}  Diagram~\ref{fig: diags}, shows the {\em overextended} Dynkin diagrams $A_n^{++}$ and $D_n^{++}$ \cite[\S 5.1]{CRRRV}, and the diagram $E_n$. For $n \geq 8$, we note that $A_n^{++}$, $D_n^{++}$ and $E_{n+2}$ all have $n+2$ vertices and are of Lorentzian signature. We further know (for instance, from \cite{svis-e10}) that $A_8^{++} \preceq E_{10}$ and $D_8^{++} \preceq E_{10}$. 

             The determinants of these GCMs may be readily computed:
             \[ \det(A_n^{++}) = -(n+1), \;\; \det(D_n^{++}) = -4, \;\; \det(E_{n+2}) = 7-n \]

             It follows from Proposition~\ref{prop:lemcor}(2) that $A_n^{++} \not\preceq E_{n+2}$ for $n \geq 16$ and $D_n^{++} \not\preceq E_{n+2}$ for $n \geq 12$. 
 
\bigskip
\noindent
\begin{diagram}\label{fig: diags}
	\begin{tikzpicture}[scale=.29]
		
		\begin{scope}[shift={(-5,0)}]
			\draw (12.5,-3.5) node{$D_n^{++}$};
			\draw (5,0) node[below] {} circle (.3cm);
			\draw (8,0) node[below] {} circle (.3cm);
			\draw (11,0) node[below] {} circle (.3cm);
			\draw (18.5,0) node[below] {} circle (.3cm);
			\draw (21.5,0) node[below] {} circle (.3cm);
			\draw (11,3) node[below] {} circle (.3cm);
			\draw (18.5,3) node[below] {} circle (.3cm);
			\draw (14.2,0) node[below] {} circle (.08cm);
			\draw (14.8,0) node[below] {} circle (.08cm);
			\draw (15.4,0) node[below] {} circle (.08cm);
			
			\draw (5.3,0) -- +(2.4,0);
			\draw (8.3,0) -- +(2.4,0);
			\draw (11.3,0) -- +(2.4,0);
			\draw (11,0.3) -- +(0,2.4);
			\draw (15.8,0) -- +(2.4,0);
			\draw (18.8,0) -- +(2.4,0);
			\draw (18.5,0.3) -- +(0,2.4);

		\end{scope}
		\begin{scope}[shift = {(16,0)}]
			\draw (12.5,-3.5) node{$A_n^{++}$};
			\draw (5,0) node[below] {} circle (.3cm);
			\draw (8,0) node[below] {} circle (.3cm);
			\draw (11,0) node[below] {} circle (.3cm);
			\draw (14.9,0) node[below] {} circle (.08cm);
			\draw (15.5,0) node[below] {} circle (.08cm);
			\draw (16.1,0) node[below] {} circle (.08cm);
			\draw (20,0) node[below] {} circle (.3cm);
			\draw (11,3) node[below] {} circle (.3cm);
			
			\draw (5.3,0) -- +(2.4,0);
			\draw (8.3,0) -- +(2.4,0);
			\draw (11.3,0) -- +(2.4,0);
			\draw (11,0.3) -- +(0,2.4);
			\draw (17.3,0) -- +(2.4,0);

			\draw    (5,0) to[out=-30,in=-150] (20,0);

		\end{scope}
		
		\begin{scope}[shift = {(35,0)}]
			\draw (12.5,-3.5) node{$E_n$};
			\draw (5,0) node[below] {} circle (.3cm);
			\draw (8,0) node[below] {} circle (.3cm);
			\draw (11,0) node[below] {} circle (.3cm);
			\draw (14.9,0) node[below] {} circle (.08cm);
			\draw (15.5,0) node[below] {} circle (.08cm);
			\draw (16.1,0) node[below] {} circle (.08cm);
			\draw (20,0) node[below] {} circle (.3cm);
			\draw (11,3) node[below] {} circle (.3cm);
			
			\draw (5.3,0) -- +(2.4,0);
			\draw (8.3,0) -- +(2.4,0);
			\draw (11.3,0) -- +(2.4,0);
			\draw (11,0.3) -- +(0,2.4);
			\draw (17.3,0) -- +(2.4,0);

		\end{scope}
		\end{tikzpicture}
\end{diagram}

\section{Finite, Affine and Hyperbolic}\label{sec:fah}
\subsection{}
When one of the GCMs is of finite, untwisted affine or hyperbolic type, we can strengthen the conclusion of Corollary~\ref{cor:cor-anti} as follows.
\begin{proposition}\label{prop:mainprop}
  Let $A$ be a GCM of finite, untwisted affine or hyperbolic type and let $B \in \gcmset$ such that $A \preceq B \preceq A$. Then $B$ coincides with $A$ upto a simultaneous permutation of rows and columns.
\end{proposition}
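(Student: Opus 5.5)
The plan is to upgrade the lattice-theoretic information in Corollary~\ref{cor:cor-anti} and Lemma~\ref{lem:det-div} to an isometry of root lattices that carries real roots to real roots. I would then show that this isometry is forced to identify the two root systems.

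\textbf{Step 1: set-up and reduction to one component.} By Corollary~\ref{cor:cor-anti}, applied componentwise if $A$ is decomposable (in the finite case, matching components by size and determinant), $A$ and $B$ have the same size $n$, the same type, and $\det A=\det B$. Let $\Sigma=\{\beta_i\}$ be a linearly independent \pisystem of type $B$ in $A$, and $\Sigma'$ one of type $A$ in $B$. By \eqref{eq:def-q}, $\Sigma$ gives a form-preserving map $\varphi:Q(B)\to Q(A)$ with $\alpha_i(B)\mapsto\beta_i$, and $\Sigma'$ gives $\psi:Q(A)\to Q(B)$. Two facts are needed:
\begin{itemize}
\item[(a)] $\varphi$ and $\psi$ are $\integers$-isomorphisms.
\item[(b)] They carry real roots to real roots.
\end{itemize}
For (b), $\varphi(s_{\alpha_i(B)}\gamma)=s_{\beta_i}\varphi(\gamma)$ because $\varphi$ is an isometry, so $\varphi(W_B\cdot\Pi_B)=W_\Sigma\cdot\Sigma\subset\reroots(A)$; the same argument applies to $\psi$.

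\textbf{Step 2: the finite and hyperbolic cases.} Here $\det A\neq0$. Since $|\det A|=|\det B|$, Lemma~\ref{lem:det-div}(2) shows that $\Sigma$ is a $\integers$-basis of $Q(A)$, so $\varphi$ is a lattice isometry; likewise $\psi$. Hence $\theta=\varphi\circ\psi$ is an isometry of $Q(A)$ mapping $\reroots(A)$ injectively into itself.
\begin{itemize}
\item In finite type $\reroots(A)$ is finite, so $\theta$ permutes it. Consequently $\varphi$ is a bijection $\reroots(B)\to\reroots(A)$, so the root systems of $A$ and $B$ are isometric. An isometry of root systems carries a base to a base, so $\varphi(\Pi_B)$ is $W$-conjugate to $\Pi_A$. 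Therefore $B=M(\varphi(\Pi_B))$ equals $A$ up to relabelling.
\item In hyperbolic type I would use the known description of real roots: $\reroots(A)$ consists of the $\alpha\in Q(A)$ with $\form{\alpha}{\alpha}=\form{\alpha_i}{\alpha_i}$ for some $i$ and $2\form{\alpha}{\alpha_j}/\form{\alpha}{\alpha}\in\integers$ for all $j$ (Kac, Prop.~5.10). I would check that every lattice isometry of $Q(A)$ preserving the set of norms $\{\form{\alpha_i}{\alpha_i}\}$ preserves this set. Then $\psi^{-1}=\varphi\circ\theta^{-1}$ also maps real roots to real roots, so $\varphi$ is again a bijection on real roots.
\item Given that bijection, $\varphi$ conjugates $W_B$ onto $W_A$, since both are generated by reflections in real roots. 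A hyperbolic Weyl group together with its set of real roots determines the simple system up to $\pm W$ (the walls of the Tits cone / fundamental chamber). This gives $\varphi(\Pi_B)=\pm w\Pi_A$ and hence $B=A$.
\end{itemize}

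\textbf{Step 3: the untwisted affine case.} Here $\det A=0$, so Lemma~\ref{lem:det-div}(2) is unavailable. Instead, I would note that $\varphi$ is injective and $\integers$-linear between rank-$n$ lattices and must carry the radical $\integers\delta_B$ into $\integers\delta_A$. Passing to the quotients by the radicals gives an isometry of the lattices of the underlying finite root systems, mapping roots to roots, since the real roots are $\{\bar\alpha+k\delta\}$. Applying Step 2 to the finite parts $\mathring A,\mathring B$ (using $\mathring A\preceq\mathring B\preceq\mathring A$ in the induced sense) gives $\mathring A=\mathring B$. Untwistedness then pins down the affine extension, so $B=A$.

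\textbf{Main obstacle.} I expect the hardest step to be showing that $\varphi$ is \emph{surjective} onto real roots in the hyperbolic case, together with the affine reduction. In both, the risk is that an isometry of lattices sends real roots only into a proper sub-root-system. My first attempt would be the arithmetic characterization of hyperbolic real roots above. Failing that, I would use the finiteness of $W$-orbits on real roots of each length plus $\theta W\theta^{-1}\subseteq W$, and argue that $\theta$ cannot map some orbit strictly into itself without changing the index or the covolume.
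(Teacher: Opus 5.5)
Your finite-type argument works. Since $\theta=\varphi\circ\psi$ injects the finite set $\roots(A)$ into itself, it permutes it. So $\varphi$ is a bijection of root systems, and conjugacy of bases finishes. This is a legitimate alternative to the paper's argument, which composes the injective Lie algebra maps $\lieg(A)\to\lieg(B)\to\lieg(A)$ of \cite{CRRRV} and uses finite-dimensionality.

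\textbf{Hyperbolic case.} Here there is a genuine gap, because the criterion you attribute to Kac is not the right one. The condition $2\form{\alpha}{\alpha_j}/\form{\alpha}{\alpha}\in\integers$ for all $j$ only says that $\alpha^\vee$ lies in the lattice dual to $Q(A)$. Kac's condition, equation~\eqref{eq:reroot-hyp}, is $k_j|\alpha_j|^2/|\alpha|^2\in\integers$, i.e.\ $\alpha^\vee\in Q^\vee(A)$, which is strictly stronger. For example, in $C_4$ realized in $\integers^4$, the vector $\alpha=\epsilon_1+\epsilon_2+\epsilon_3+\epsilon_4=\alpha_1+2\alpha_2+3\alpha_3+2\alpha_4$ has long-root norm and passes your test, but it is not a root. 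It still passes your test in the hyperbolic rank-6 diagram obtained by attaching one node to the affine node of $C_4^{(1)}$.

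With the correct criterion, your key claim is false as well: not every isometry of $Q(A)$ preserves real roots. The reflection in $(1,-1,-1,-1)$ preserves $Q(C_4)$ (the $D_4$ lattice) and sends the root $2\epsilon_1$ to $\alpha$. It extends by the identity to that hyperbolic diagram, whose root lattice splits orthogonally as $Q(C_4)\oplus(\integers\delta+\integers\alpha_{-1})$.

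The missing idea is the second half of Lemma~\ref{lem:det-div}(2): $\Sigma^\vee$ is also a $\integers$-basis of $Q^\vee(A)$. Then for $\beta=\sum_j k_j\beta_j\in\reroots(A)$ one has $\beta^\vee=\sum_j (k_j|\beta_j|^2/|\beta|^2)\,\beta_j^\vee$ with integer coefficients, so $\varphi^{-1}(\beta)\in\reroots(B)$ by \eqref{eq:reroot-hyp}. Imaginary roots match by norm via \eqref{eq:imroot-hyp}. This is Proposition~\ref{prop:po}: it gives $\varphi(\roots(B))=\roots(A)$ using only $B\preceq A$ and $\det A=\det B$, with no need for $\psi$ or $\theta$, and \cite[Prop.~5.9]{kac} then finishes. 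Your covolume fallback might be made rigorous using the finite-volume fundamental simplex of a hyperbolic Weyl group, but as written it is not an argument.

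\textbf{Affine case.} Corollary~\ref{cor:cor-anti} only tells you that $B$ is affine, not that it is untwisted. Reducing modulo $\delta$ discards exactly the information that separates untwisted from twisted. The pairs $B_l^{(1)}$ and $D_{l+1}^{(2)}$, $C_l^{(1)}$ and $A_{2l-1}^{(2)}$, $F_4^{(1)}$ and $E_6^{(2)}$, and $G_2^{(1)}$ and $D_4^{(3)}$ each have the same finite part. So ``$\overline{A}=\overline{B}$, and untwistedness pins down the extension'' assumes exactly what must be proved.

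The paper closes this with \cite[Theorem~4.2]{naito}: an affine-type \pisystem in an untwisted affine $A$ is untwisted, with $\overline{B}\preceq\overline{A}$. Without it you need a direct exclusion of twisted $B$, case by case. For instance, any two orthogonal short real roots $\pm\epsilon_i+k\delta$ and $\pm\epsilon_j+k'\delta$ ($i\neq j$) of $B_l^{(1)}$ differ by a real root. The two short end nodes of $D_{l+1}^{(2)}$ are orthogonal, so their images would violate the \pisystem condition; hence $D_{l+1}^{(2)}\not\preceq B_l^{(1)}$.
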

 In other words, $\preceq$ is a partial order on the set of such GCMs (identifying GCMs that differ by a simultaneous permutation of rows and columns). We conjecture below that Proposition~\ref{prop:mainprop} holds for all symmetrizable GCMs. In particular when $A, B$ are of twisted affine type, a possible approach is via an appeal to the Tables in \cite{FRT} or \cite{roy-venkatesh}.   
\begin{conjecture}
  The relation $\preceq$ is a partial order on all of $\gcmset$.
\end{conjecture}

\noindent
    {\em Proof of Proposition~\ref{prop:mainprop}:} Observe that by Corollary~\ref{cor:cor-anti}, the types (Finite/Affine/Indefinite) of $A$ and $B$ are the same and $\det A = \det B$. If $A$ and $B$ are both of finite type, then by \cite[Proposition 2.6]{CRRRV}, there are injective Lie algebra homomorphisms $\algA \to \lieg(B) \to \algA$. The composition is injective, and by finite-dimensionality, also surjective. Thus, $\lieg(A) \cong \lieg(B)$ and it follows that $A$ and $B$ are related by a simultaneous permutation of rows and columns (for instance by \cite{PetersonKac}); an alternate proof is an exhaustive check using the Tables of \cite{dynkin}.

    Next, suppose that $A$ and $B$ are of affine type. Since $A$ is untwisted affine, let $\overline{A}$ denote the underlying finite type GCM. Theorem 4.2 of \cite{naito} implies that $B \preceq A$ iff $B$ is also untwisted affine with $\overline{B} \preceq \overline{A}$, where $\overline{B}$ is the underlying finite type GCM of $B$. Thus $A \preceq B \preceq A$ implies $\overline{A} \preceq \overline{B} \preceq \overline{A}$ and the conclusion follows from the finite case above.

    If $A$ and $B$ are both of hyperbolic type, then the assertion follows from  proposition~\ref{prop:po} below.  \qed

\begin{proposition}\label{prop:po}
  Let $A, B$ be $n \times n$ symmetrizable GCMs of hyperbolic type, with $\det A = \det B$ and $B \preceq A$. Suppose $\Sigma = \{\beta_i\}_{i=1}^n$ is a \pisystem of type $B$ in $A$. Then
  \begin{enumerate}
  \item $\Sigma$ is $W(A)$-conjugate to $\Pi(A)$ or $-\Pi(A)$, where $\Pi(A)$ is the set of simple roots of $\lieg(A)$.
  \item In particular, $A$ and $B$ are equal up to a simultaneous permutation of rows and columns.
    \end{enumerate}
\end{proposition}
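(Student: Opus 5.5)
By Lemma~\ref{lem:det-div}(2) with $\gamma_i^\vee=\beta_i^\vee$, the hypothesis $\det A=\det B$ (nonzero, since hyperbolic GCMs are invertible) shows that $\Sigma$ is a $\integers$-basis of $Q(A)$ and $\{\beta_i^\vee\}$ is a $\integers$-basis of $Q^\vee(A)$. Since every root lies in $Q(A)=\bigoplus\integers\beta_i$, each root is an integer combination of the $\beta_i$. The plan is to show that every root of $A$ has all coefficients of the same sign with respect to $\Sigma$. Then $\Sigma$ is a base of the root system in Kac's sense, so it is $W(A)$-conjugate to $\pm\Pi(A)$.

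\textbf{Step 1: choose a chamber.} Because $B$ is hyperbolic, the form restricted to $\reals\Sigma=\reals S$ is Lorentzian, and the fundamental chamber $C_B$ of the regular subalgebra generated by $\Sigma$ meets the imaginary cone. Since $\Sigma$ is a \pisystem, the root strings give that the $\beta_i$ generate a subsystem whose real roots are $W(\Sigma)\Sigma$. Replacing $\Sigma$ by $-\Sigma$ if necessary, I will pick a regular element $h$ in the Tits cone of $A$ with $\aform{h}{\beta_i}>0$ for all $i$. Concretely, I take $h$ in the interior of the dual chamber of $\Sigma$ inside the timelike cone. For hyperbolic $A$, the Tits cone together with its negative covers the closed timelike cone, and this is what allows the sign choice. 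Moving $h$ slightly makes it $A$-regular, and then some $w\in W(A)$ takes $h$ into the fundamental chamber of $A$. Replacing $\Sigma$ by $w\Sigma$, I may assume every $\beta_i$ is a positive root of $A$ and that $\aform{h}{\beta_i}>0$ with $h$ dominant regular for $A$.

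\textbf{Step 2: compare the two positive cones.} Each $\beta_i$ is a positive root, so $\beta_i=\sum_\ell v_{i\ell}\alpha_\ell$ with $v_{i\ell}\ge 0$, and $V\in GL_n(\integers)$ with nonnegative entries. Conversely, I must show that each $\alpha_\ell$ is a nonnegative combination of the $\beta_i$; then $V^{-1}\ge 0$ as well, which forces $V$ to be a permutation matrix and gives $\Sigma=\Pi(A)$. To see that $\alpha_\ell\in\sum\integers_{\ge0}\beta_i$, I use the \pisystem property. By \cite{morita,naito}, the real roots of the subalgebra $\lieg(\Sigma)\subset\algA$ are exactly the roots of $\algA$ lying in $W(\Sigma)\Sigma$. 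I want to argue that every real root of $A$ lies in $\integers\Sigma$ and is a root of $\lieg(\Sigma)$; this holds because both root systems have the same lattice, and for hyperbolic (Lorentzian, rank $n$) real root sets it follows from the characterization of real roots as lattice vectors $\alpha$ with $\form{\alpha}{\alpha}>0$ satisfying the integrality $\aform{\alpha^\vee}{Q}\subset\integers$. This characterization is known for hyperbolic symmetric-like cases, and I apply it to both $A$ and $B$ via the form-preserving map \eqref{eq:def-q}. Granting it, $\alpha_\ell$ is a real root of $\lieg(\Sigma)$ with $\aform{h}{\alpha_\ell}>0$, hence a positive root for $\Sigma$, hence a nonnegative combination of the $\beta_i$.

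\textbf{Main obstacle.} The hard step is this identification of real roots: showing that $\alpha_\ell$ is actually a root of the subsystem spanned by $\Sigma$, rather than just lying in its lattice. If the lattice characterization of real roots is unavailable in the nonsymmetric case, I would instead argue as follows. Suppose some $\alpha_\ell$ has a negative coefficient. Take a minimal positive root of $A$ that is not $\Sigma$-nonnegative and reflect by a suitable $s_{\beta_i}$. The \pisystem condition ($\beta_i-\beta_j\notin\roots$) together with $\Sigma$ being a $\integers$-basis then yields a root strictly smaller in height, which is a contradiction. Part~(2) then follows immediately, since $W$-conjugation and negation preserve the matrix $\left[\aform{\beta_i^\vee}{\beta_j}\right]$, so $B=A$ up to reordering.
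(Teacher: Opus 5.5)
\textbf{There is a genuine gap at the central step.} Your architecture matches the paper's. Lemma~\ref{lem:det-div}(2) gives the two bases. You then identify the root system of $\lieg(\Sigma)$ with that of $\algA$, conclude that $\Sigma$ is a root basis, and finish with Kac's Prop.~5.9. The step you flag as the main obstacle, that every real root of $A$ is the image of a real root of $B$, is not established.

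The characterization you invoke is false: a positive-norm lattice vector $\alpha$ with $\langle \alpha^\vee, Q\rangle\subset\mathbb{Z}$ need not be a root. For example, take $A=\begin{pmatrix}2&-3\\-3&2\end{pmatrix}$ (hyperbolic) with $|\alpha_i|^2=2$. The vector $v=\alpha_1-\alpha_2$ has $|v|^2=10$ and $2(v|\alpha_i)/|v|^2=\pm1$, yet all real roots have norm $2$. The correct statement is Kac's Prop.~5.10, valid for every symmetrizable finite, affine or hyperbolic $A$ (no ``symmetric-like'' restriction). It says $\alpha=\sum_j k_j\alpha_j$ is real iff $|\alpha|^2>0$ and $k_j|\alpha_j|^2/|\alpha|^2\in\mathbb{Z}$ for all $j$, which is the same as $\alpha^\vee\in Q^\vee$.

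So what has to be transported is membership in the \emph{coroot} lattice. ``Both root systems have the same lattice'' is therefore not a reason. Your fallback descent is only a sketch: reflecting by some $s_{\beta_i}$ has no controlled effect on height with respect to $\Pi(A)$, and the $\pi$-condition constrains only the differences $\beta_i-\beta_j$, not an arbitrary root. It also never uses the coroot information.

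\textbf{The fix is the paper's argument, and you already have its ingredient.} You know $\{\beta_j^\vee\}$ is a $\mathbb{Z}$-basis of $Q^\vee(A)$. Take $\beta=\sum_j k_j\beta_j\in\Delta^{re}(A)$. Using $\gamma^\vee=2\nu^{-1}(\gamma)/|\gamma|^2$, one gets $\beta^\vee=\sum_j c_j\beta_j^\vee$ with $c_j=k_j|\beta_j|^2/|\beta|^2$. Since $\beta^\vee\in Q^\vee(A)$, every $c_j$ is an integer.

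Because $q_\Sigma$ is form preserving, these $c_j$ are exactly Kac's integrality numbers for $q_\Sigma^{-1}(\beta)=\sum_j k_j\alpha_j(B)$. Hence $q_\Sigma^{-1}(\beta)$ is a real root of $B$. Imaginary roots correspond because $q_\Sigma$ is a form-preserving lattice isomorphism and both matrices are hyperbolic. Together with the inclusion $q_\Sigma(\Delta(B))\subset\Delta(A)$, this gives $q_\Sigma(\Delta(B))=\Delta(A)$.

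Every root of $A$ then has coefficients of one sign with respect to $\Sigma$, so $\Sigma$ is a root basis, and Kac's Prop.~5.9 gives $W(A)$-conjugacy to $\pm\Pi(A)$ directly. With this identification, your Step~1 (choosing $h$ in the Tits cone, whose covering claims you assert without proof) is unnecessary. If you keep it, your permutation-matrix ending ($V\geq 0$ and $V^{-1}\geq 0$) is a valid alternative finish.
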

\bmyproof
Consider the map $\qsig: Q(B) \to Q(A)$ of equation~\eqref{eq:def-q}, defined by $\alpha_i(B) \mapsto \beta_i$ for all $i$, where $\Pi(B) = \{\alpha_i(B): 1 \leq i \leq n\}$ is the set of simple roots of $\lieg(B)$. We assume for convenience that the symmetric bilinear forms on $Q(A)$ and $Q(B)$ are chosen compatibly as in \S\ref{sec:symm-gcm}, so that $\qsig$ is form preserving (the arguments below will still work for any choices of standard invariant forms, since they only differ by scaling by positive rationals).

Using the given hypothesis and the fact that hyperbolic GCMs are necessarily invertible, we obtain from the second part of lemma~\ref{lem:det-div} that: $(i)$ $\Sigma$ is a $\integers$-basis of $Q(A)$ and $(ii)$  $\Sigma^\vee=\{\beta^\vee_i\}_{i=1}^n$ is a $\integers$-basis of $Q^\vee(A)$.

We observe that $\qsig$ is a form preserving lattice isomorphism of $Q(B)$ onto $Q(A)$.  We now claim that $\qsig(\roots(B)) = \roots(A)$. By Corollary 2.12 in \cite{CRRRV}, we know that $\qsig(\roots(B)) \subset \roots(A)$. We only need to prove the reverse inclusion.
Towards this end, we recall the following description of the set of roots of a symmetrizable Kac-Moody algebra $\lieg(C)$ of Finite, Affine or Hyperbolic type \cite[Prop 5.10]{kac}:
\begin{align}
  \reroots(C) &= \{ \alpha=\sum_j k_j \,\alpha_j(C)  \,\in Q(C): |\alpha|^2 >0 \text{ and } k_j\,|\alpha_j(C)|^2/|\alpha|^2 \in \integers \text{ for all } j\} \label{eq:reroot-hyp}\\
  \imroots(C) &= \{\alpha \in Q(C) \backslash \{0\}: |\alpha|^2 \leq 0\} \label{eq:imroot-hyp}
\end{align} forms
where $\alpha_j(C)$ are the simple roots, $Q(C)$ is the root lattice, and we fix any standard invariant form on $\lieg(C)$.
We apply this when $C=A, B$ below.

\smallskip
Since $|\qsig(\alpha)|^2 = |\alpha|^2$ for all $\alpha \in Q(B)$, it is clear from equation~\eqref{eq:imroot-hyp} that $\qsig(\imroots(B)) = \imroots(A)$. Now let $\beta \in \reroots(A)$ and define $\alpha = \qsig^{-1}(\beta)$. We need to prove that $\alpha \in \reroots(B)$.
Let $\beta = \sum_j k_j \beta_j$ for some integers $k_j$; thus $\alpha = \sum_j k_j \, \alpha_j(B)$. Since $\beta$ is a real root, $|\alpha|^2 = |\beta|^2 >0$.
Define \[c_j = k_j \, |\alpha_j(B)|^2 / |\alpha|^2 = k_j \,|\beta_j|^2 / |\beta|^2 \]
Equation~\eqref{eq:reroot-hyp} implies that $\alpha$ is a real root of $\lieg(B)$ if and only if $c_j \in \integers$ for all $j$. Consider $\beta^\vee \in Q^\vee(A)$; by $(ii)$ above, we know that $\Sigma^\vee$ forms a $\integers$-basis of the coroot lattice $Q^\vee(A)$. Now $\gamma^\vee =2 \nu^{-1}(\gamma)/ |\gamma|^2$ for any real root $\gamma$ of $\lieg(A)$ \cite[Prop. 5.1]{kac}, where $\nu$ is the linear isomorphism from the Cartan subalgebra of $\lieg(A)$ to its dual induced by the form.  A simple computation now shows :
\[ \beta^\vee = \sum_j c_j \,\beta^\vee_j \]
This proves the integrality of the $c_j$, and hence our claim.
Thus, $\qsig(\roots(B)) = \roots(A)$. Since $\qsig(\Pi(B)) = \Sigma$, this means that $\Sigma$ is a {\em root basis} of $\roots(A)$ \cite[\S 5.9]{kac}, i.e., $\Sigma$ is a $\integers$-basis of $Q(A)$ such that every element of $\roots(A)$ can be expressed as an integral linear combination of $\Sigma$ with all coefficients of the same sign. By \cite[Proposition 5.9]{kac}, we conclude that $\Sigma$ is $W(A)$-conjugate to $\pm \Pi(A)$.
Finally, since $\Pi(A)$ is a \pisystem of type $A$ in $A$, we conclude that $A=B$, up to a simultaneous permutation of rows and columns.
\emyproof

Propositions~\ref{prop:po} and \ref{prop:lemcor} imply the following useful lemma:
\begin{lemma}\label{lem:det-multiple} {\rm (Determinant criterion)}
Let $A, B$ be symmetrizable hyperbolic GCMs of the same size. If $B \preceq A$ and $B \neq A$ (up to simultaneous reordering of rows and columns), then $\det B = k \det A$ for some $k \geq 2$.
\end{lemma}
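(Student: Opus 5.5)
The plan is to combine Proposition~\ref{prop:lemcor}(2) with Proposition~\ref{prop:po}(2). Let $A, B$ be symmetrizable hyperbolic GCMs of the same size $n$ with $B \preceq A$. Then there is a linearly independent \pisystem $\Sigma=\{\beta_i\}_{i=1}^n$ of type $B$ in $A$. Since $m=n$, Proposition~\ref{prop:lemcor}(2) gives $\det B = k \det A$ for some integer $k \geq 1$. It therefore suffices to rule out $k=1$ when $B \neq A$.

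Hyperbolic GCMs are invertible, so $\det A \neq 0$ and $k$ is uniquely determined. We argue by contrapositive. Suppose $k=1$, so that $\det B = \det A$. Then all hypotheses of Proposition~\ref{prop:po} hold: $A$ and $B$ are $n\times n$ symmetrizable hyperbolic GCMs, $\det A=\det B$, and $\Sigma$ is a \pisystem of type $B$ in $A$. Part (2) of that proposition gives that $A$ and $B$ coincide up to a simultaneous permutation of rows and columns, which contradicts $B \neq A$. Hence $k \neq 1$, and since $k\ge 1$ is an integer, $k \geq 2$.

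The argument is a direct corollary, so no real obstacle is expected. The one point to check is that the \pisystem supplied by the definition of $\preceq$ is linearly independent and has exactly $n$ elements, so that both Proposition~\ref{prop:lemcor}(2) (which needs $m=n$) and Proposition~\ref{prop:po} apply to the same $\Sigma$. Both follow immediately from the definition of $\preceq$ and the equal-size hypothesis.
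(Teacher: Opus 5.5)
Your proposal is correct and follows the paper's own argument: the paper obtains this lemma directly from Proposition~\ref{prop:lemcor}(2), which gives $\det B = k\det A$ with $k \geq 1$, together with Proposition~\ref{prop:po}, which rules out $k=1$ when $B \neq A$. Your remark that $\det A \neq 0$ (hyperbolic GCMs are invertible), so that $k$ is uniquely determined, is the one small point worth making explicit, and you made it.
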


\section{Necessary conditions for $B \preceq A$}\label{Necessary}

\subsection{}
The following is an immediate corollary of the discussion of \S\ref{sec:symm-gcm}, together with the fact that a real root is Weyl conjugate to some simple root, and therefore has the same length.
\begin{lemma}\label{lem:rlr} {\rm (Root length criterion)} Let $A,B$ be indecomposable symmetrizable GCMs such that $B \preceq A$. For each pair of simple roots of $B$, the ratio of their lengths  equals that of some pair of simple roots of $A$ (with respect to any choices of standard invariant forms on $\lieg(A)$ and $\lieg(B)$).
\end{lemma}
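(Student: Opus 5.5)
The plan is to transport root lengths from $\lieg(B)$ to $\lieg(A)$ along a \pisystem and then use Weyl conjugacy inside $\lieg(A)$. Since $B \preceq A$, fix a linearly independent \pisystem $\Sigma=\{\beta_i\}_{i=1}^m$ of type $B$ in $A$. As in \S\ref{sec:symm-gcm}, fix a standard invariant form $\form{\cdot}{\cdot}$ on $\lieg(A)$ and take the compatible symmetrization $D'=\mathrm{diag}(\form{\beta_i}{\beta_i}/2)$ of $B$. Then the map $\qsig$ of \eqref{eq:def-q} is form preserving, so $|\alpha_i(B)|^2=|\beta_i|^2$ for every $i$. Hence for any two simple roots of $B$,
\[ |\alpha_i(B)|^2/|\alpha_j(B)|^2 = |\beta_i|^2/|\beta_j|^2 . \]

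Next, each $\beta_i$ is a real root of $\lieg(A)$, so $\beta_i = w_i(\alpha_{k_i}(A))$ for some $w_i \in W(A)$ and some simple root $\alpha_{k_i}(A)$. The form is $W(A)$-invariant, so $|\beta_i|^2=|\alpha_{k_i}(A)|^2$. Substituting into the display gives
\[ |\alpha_i(B)|^2/|\alpha_j(B)|^2 = |\alpha_{k_i}(A)|^2/|\alpha_{k_j}(A)|^2, \]
which is a ratio of lengths of a pair of simple roots of $A$. This proves the statement for the particular compatible choice of forms.

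It remains to remove the dependence on the choice of forms. Here indecomposability is the key input. For an indecomposable symmetrizable GCM, the symmetrizing diagonal matrix is unique up to a positive scalar. So any standard invariant form on $\lieg(A)$, or on $\lieg(B)$, is a positive multiple of the one used above. Rescaling a form does not change ratios of squared lengths, so the conclusion holds for arbitrary choices of standard invariant forms.

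The only point needing care is this uniqueness of the symmetrization up to scalar. It follows from connectedness of the Dynkin diagram: the relation $d_i a_{ij}=d_j a_{ji}$ with $a_{ij}\neq 0$ determines $d_j/d_i$ along each edge, so fixing one $d_i$ fixes them all. Apart from that, the argument only combines \S\ref{sec:symm-gcm} with Weyl conjugacy of real roots to simple roots.
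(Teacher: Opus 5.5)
Your proof is correct and follows the paper's own argument. The paper derives the lemma as an immediate consequence of the form-preserving identification of \S\ref{sec:symm-gcm}, which gives $|\alpha_i(B)|^2=|\beta_i|^2$, combined with Weyl conjugacy of each real root $\beta_i$ to a simple root of $A$. Your closing paragraph on independence of the choice of forms, via uniqueness of the symmetrization up to a positive scalar for indecomposable GCMs, makes explicit a step the paper leaves implicit.
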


For instance, this implies that there doesn't exist a \pisystem of type $G_2$ in any other finite type GCM.

\subsection{}
Let $X$ be the Dynkin diagram of a symmetrizable Kac-Moody algebra and let $W$ denote its Weyl group. We define $\Xsh$ to be the subdiagram formed by the simple roots of shortest length, i.e,
\[\Xsh = \{p \in X: |\alpha_p| = \min_{i \in X} |\alpha_i|\}\]
Similarly $\Xlong$ is the subdiagram formed by the simple roots of longest length. We also let
\[\rerootsh(X) =\{\alpha \in \reroots(X): |\alpha| = \min_{i \in X} |\alpha_i|\} = W\cdot\Xsh\]
and $\rerootlong(X) = W\cdot \Xlong$.
We say $X$ is {\em doubly-laced} (resp. {\em triply-laced}) if (i) every edge of $X$ is either a single edge or a double (resp. triple) edge with an arrow pointing in one direction, and (ii) $X$ contains at least one double (resp. triple) edge.  For instance, the finite type diagrams $B_n, C_n, F_4$ are doubly-laced while $G_2$ is triply-laced. The next lemma is a direct consequence of these definitions.

\begin{lemma}\label{lem:shortlong-d-div}
  Let $X$ be a doubly- or triply-laced Dynkin diagram of a symmetrizable Kac-Moody algebra (we set $d=2$ in the former case, $d=3$ in the latter). Then:
  \be
  \item $d \mid \aform{\alpha_i^\vee}{\alpha_j}$ for all $i \in \Xsh, \, j \in X\backslash \Xsh$.
\smallskip\item $d \mid \aform{\alpha_j^\vee}{\alpha_i}$ for all $i \in \Xlong,\, j \in X\backslash \Xlong$.\qed
  \ee
\end{lemma}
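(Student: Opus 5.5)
The plan is to compute $\aform{\alpha_i^\vee}{\alpha_j}$ directly from the lengths of the two simple roots. In a doubly-laced (resp. triply-laced) diagram, the edge conventions together with symmetrizability give two facts. Along a single edge the two endpoints have equal length. Along a $d$-fold edge with an arrow, the squared lengths differ by the factor $d$, and the Cartan integers are $-1$ (from the long root to the short root) and $-d$ (from the short root to the long root). Recall $\aform{\alpha_i^\vee}{\alpha_j} = 2\form{\alpha_i}{\alpha_j}/|\alpha_i|^2$, so $a_{ij}|\alpha_i|^2 = a_{ji}|\alpha_j|^2$.

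First I would show that every connected component of $X$ (indeed $X$ itself, as the diagram is assumed indecomposable in context, though the argument is local) has only two root lengths. Write $s = \min_{i\in X}|\alpha_i|^2$. Moving along a single edge preserves length, and moving along a $d$-fold edge multiplies it by $d^{\pm1}$. So each squared length has the form $d^k s$ with $k\ge 0$. I need the stronger conclusion that every squared length is $s$ or $ds$, and this is the step I expect to be the main obstacle. The definition, read literally, only constrains edges, so a chain of double edges oriented the same way would give lengths $s, 2s, 4s$.

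To handle this, I would interpret ``doubly-laced'' as it is intended for the finite/affine/hyperbolic-type diagrams in the paper, namely that all roots have length $s$ or $ds$. Alternatively, I would prove the statement edge by edge, which suffices, as follows.

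Consider $i\in\Xsh$ and $j\notin\Xsh$, so $|\alpha_i|^2 = s < |\alpha_j|^2$.
\begin{itemize}
\item If $i$ and $j$ are not adjacent, then $a_{ij}=0$, which is divisible by $d$.
\item If they are adjacent, the edge cannot be simple, since a simple edge forces equal lengths. So it is a $d$-fold edge with $\alpha_i$ the shorter root. Then $a_{ij} = -d$, because $a_{ij}/a_{ji} = |\alpha_j|^2/|\alpha_i|^2 = d$ and the only permitted pair is $\{-1,-d\}$.
\end{itemize}
In every case $d \mid \aform{\alpha_i^\vee}{\alpha_j}$.

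Part (2) is the dual argument. Take $i\in\Xlong$ and $j\notin\Xlong$, so $|\alpha_j|^2<|\alpha_i|^2$. An adjacent pair must again be joined by a $d$-fold edge with $\alpha_j$ the shorter root, giving $a_{ji} = 2\form{\alpha_j}{\alpha_i}/|\alpha_j|^2 = -d$. Non-adjacent pairs give $0$. Hence $d\mid \aform{\alpha_j^\vee}{\alpha_i}$.

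Note that this edge-local argument never actually needs the two-length claim. It uses only the fact that an edge whose endpoints differ in length is a $d$-fold edge, and that the short root sits at the $-d$ end, which is what symmetrizability forces.
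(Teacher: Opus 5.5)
Your edge-by-edge argument is correct and is exactly the ``direct consequence of the definitions'' that the paper invokes without writing out. Any nonzero $a_{ij}$ between vertices of different lengths can only come from a $d$-fold edge, and the relation $a_{ij}|\alpha_i|^2=a_{ji}|\alpha_j|^2$ then forces the entry $-d$ into the row of the shorter root.

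You were right to drop the two-length reduction rather than build it into the definition. The paper's notion of doubly-laced allows more than two root lengths (e.g.\ $A_{4}^{(2)}$), and Proposition~\ref{prop:forb} imposes ``exactly two real root lengths'' as a separate hypothesis. Your local argument needs no such assumption.
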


Now consider \pisystems $\Sigma$ in $X$ such that $\Sigma \subset \rerootsh$ or $\Sigma \subset \rerootlong$. We seek to understand the possible types of such $\Sigma$. The proposition of the next subsection is the important result that will enable us to answer this question. This proposition is vastly more general and can be applied to a wide variety of settings.

\subsection{} Let $X$ be the Dynkin diagram of a symmetrizable Kac-Moody algebra and let $Y$ be a subdiagram of $X$. We let $\roots(Y)$ denote the set of roots of $Y$, and identify it with $Q(Y) \cap \roots(X)$ where $Q(Y) = \displaystyle\bigoplus_{i \in Y} \integers \alpha_i$. Let $W$ denote the Weyl group of $X$. The following proposition concerns multisubsets $\Sigma$ of the set $W\cdot \reroots(Y) = \displaystyle\bigcup_{p \in Y} W\alpha_p$. We recall also the notation $M(\Sigma)$ from \S\ref{d:pisys}.

\begin{theorem}\label{thm:pisys-mod-d}
    Let $X$ be the Dynkin diagram of a symmetrizable Kac-Moody algebra, $Y$ a subdiagram of $X$ and $d \geq 2$ an integer.
Suppose that either:
\begin{align}
  d \mid \aform{\alpha_j^\vee}{\alpha_i} & \text{ for all } i \in Y,\, j \in X\backslash Y, \text{ or } \label{eq:ddiv1}\\
  d \mid \aform{\alpha_i^\vee}{\alpha_j} & \text{ for all } i \in Y, \, j \in X\backslash Y.\label{eq:ddiv2} 
\end{align}
Let $\Sigma = \{\beta_i: 1 \leq i \leq m\}$ be a multiset with $\beta_i \in W\cdot \reroots(Y)$. Then, there exists a multiset $\bbar[\Sigma] =  \{\bbar_i: 1 \leq i \leq m\}$ with $\bbar_i \in \reroots(Y)$ such that
\[M(\Sigma) \equiv M(\bbar[\Sigma]) \pmod{d}\] 
\end{theorem}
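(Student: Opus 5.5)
The idea is to fix reduced-or-not words for the Weyl group elements involved and to delete from them every simple reflection $s_j$ with $j\in X\setminus Y$. Deletion is compatible with concatenation and inversion of words. We then check that, modulo $d$, deleting these letters does not change the relevant pairings.

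\emph{Setup.} For each $k$ write $\beta_k = w_k\alpha_{p_k}$ with $p_k\in Y$ and $w_k\in W$. Fix a word $w_k = s_{i_1}\cdots s_{i_r}$ in the simple reflections of $X$. Let $\overline{w}_k\in W(Y)$ be the element given by the word obtained by deleting all letters $s_j$ with $j\notin Y$. Set $\bbar_k := \overline{w}_k\alpha_{p_k}\in\reroots(Y)$. Then by $W$-invariance of the pairing,
\[ M(\Sigma)_{kl} = \aform{\beta_k^\vee}{\beta_l} = \aform{\alpha_{p_k}^\vee}{u\,\alpha_{p_l}}= \aform{u^{-1}\alpha_{p_k}^\vee}{\alpha_{p_l}}, \qquad u = w_k^{-1}w_l . \]
The word for $u$ is the inverse word of $w_k$ followed by the word of $w_l$. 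Deleting letters in this word gives exactly $\overline{u}=\overline{w}_k^{-1}\overline{w}_l$, so $M(\bbar[\Sigma])_{kl} = \aform{\alpha_{p_k}^\vee}{\overline{u}\,\alpha_{p_l}}$. It therefore suffices to prove the following: for any word $u$ in the simple reflections of $X$ and any $p,q\in Y$,
\[ \aform{\alpha_p^\vee}{u\alpha_q}\equiv\aform{\alpha_p^\vee}{\overline{u}\alpha_q}\pmod d. \]

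\emph{Case \eqref{eq:ddiv2}.} Let $\pi:Q(X)\to Q(Y)$ be the projection killing $\alpha_j$ for $j\notin Y$. I would prove by induction on word length that $\pi(u x)\equiv \overline{u}\,\pi(x) \pmod{dQ(Y)}$ for all $x\in Q(X)$.
\begin{itemize}
\item For a letter $s_j$ with $j\notin Y$, we have $\pi(s_jx)=\pi(x)-\aform{\alpha_j^\vee}{x}\pi(\alpha_j)=\pi(x)$ exactly.
\item For a letter $s_i$ with $i\in Y$, we have $\pi(s_ix)=\pi(x)-\aform{\alpha_i^\vee}{x}\alpha_i$. By \eqref{eq:ddiv2}, $\aform{\alpha_i^\vee}{x}\equiv\aform{\alpha_i^\vee}{\pi(x)}\pmod d$, so $\pi(s_ix)\equiv s_i\pi(x)$.
\item The congruence mod $dQ(Y)$ is preserved by $s_i$, $i\in Y$, since $s_i$ acts on $Q(Y)$ by an integer matrix. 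This closes the induction.
\end{itemize}
Finally, \eqref{eq:ddiv2} again gives $\aform{\alpha_p^\vee}{y}\equiv\aform{\alpha_p^\vee}{\pi(y)}\pmod d$ for $p\in Y$. Taking $y=u\alpha_q$ yields the claim.

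\emph{Case \eqref{eq:ddiv1}.} Use the second expression $\aform{u^{-1}\alpha_p^\vee}{\alpha_q}$ and run the dual argument on the coroot lattice. Let $\pi^\vee:Q^\vee(X)\to Q^\vee(Y)$ kill the $\alpha_j^\vee$ with $j\notin Y$, and use $s_i(h)=h-\aform{h}{\alpha_i}\alpha_i^\vee$. Condition \eqref{eq:ddiv1} says $d\mid\aform{\alpha_j^\vee}{\alpha_i}$, which is exactly what makes $\aform{h}{\alpha_i}\equiv\aform{\pi^\vee h}{\alpha_i}$ for $i\in Y$. The same induction then goes through verbatim, with $\overline{u^{-1}}=\overline{u}^{-1}$.

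\emph{Main obstacle.} The main point needing care is that $\overline{w}$ depends on the chosen word, since deletion need not define a homomorphism $W\to W(Y)$. I avoid needing well-definedness by fixing one word per $\beta_k$ once and for all. I then only use that deletion commutes with concatenation and inversion of words, which is all the argument above requires. Repetitions in the multiset cause no issue.
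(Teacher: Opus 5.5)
Your proof is correct. It shares the paper's core idea: delete the reflections $s_j$ for $j\in X\setminus Y$ and work modulo $d$. It organizes that idea differently in two ways.

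First, the paper shows that deletion is a genuine homomorphism $W\to W(Y)$. Since $d\geq 2$ divides $a_{ji}$ (resp.\ $a_{ij}$), one never has $a_{ij}a_{ji}=1$ for $i\in Y$, $j\notin Y$. Hence every $m_{ij}$ is $2,4,6$ or $\infty$, and the Coxeter relations are respected. So your remark that deletion ``need not'' be a homomorphism is too cautious under these hypotheses. Your fixed-word device is unnecessary, though harmless, and it does free the argument from the Coxeter presentation.

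Second, the paper proves the two-sided congruence \eqref{eq:imp-obs} for roots and coroots together, using $W$-invariant lattices $R$ and $R^\vee$, and then expands $\langle\sigma\alpha^\vee,\tau\alpha'\rangle$. You instead use $W$-invariance to reduce to $\langle\alpha_p^\vee,u\alpha_q\rangle$ (or $\langle u^{-1}\alpha_p^\vee,\alpha_q\rangle$). You therefore need only the root-side congruence under \eqref{eq:ddiv2}, and only the coroot-side one under \eqref{eq:ddiv1}.

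Your projection statement $\pi(u\alpha_q)\equiv\overline{u}\,\alpha_q \pmod{dQ(Y)}$ is exactly $u\alpha_q\in\overline{u}\,\alpha_q+R$ for the paper's $R=dQ(Y)\oplus Q(X\setminus Y)$. So it still supplies what Lemmas~\ref{lem:ta1-hyp} and~\ref{lem:da2-hyp} later extract from this proof.

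The paper's route gives a canonical reduction $\beta\mapsto\overline{\beta}$, well defined modulo $R$. Yours is more elementary and needs only one of the two congruences.
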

\bmyproof
Let $s_i$ denote the simple reflection corresponding to the vertex $i \in X$ and let $W(Y)$ be the (standard parabolic) subgroup of $W$ generated by the $\{s_i: i \in Y\}$. The given hypothesis implies by \cite[Prop 3.13]{kac} that for each $i \in Y, \, j \in X \backslash Y$, $(s_is_j)^{m_{ij}} =1$ where $m_{ij} = 2,4,6$ or $\infty$. Since these are even (or $\infty$), it follows that the map  $W \to W(Y)$ defined on the generators by:
\[s_i \mapsto \begin{cases} s_i & i \in Y \\ 1 & i \in X\backslash Y \end{cases}\]
extends to a group homomorphism. We denote it $w \mapsto \bbar[w]$.
Let $Q(X), \;Q^\vee(X)$ denote the root and coroot lattices of $X$. We define sublattices $R, R^\vee$ as follows. If \eqref{eq:ddiv1} holds, then $R := d\,Q(X)$, and
\[R^\vee :=   d\,Q^\vee(Y) \oplus Q^\vee(X\backslash Y) = \bigoplus_{i \in Y} \; \integers \,(d\alpha^\vee_i) \; \oplus \;\bigoplus_{j \not\in Y}\; \integers \alpha^\vee_j. \]
If \eqref{eq:ddiv2} holds, then
\[R :=   d\,Q(Y) \oplus Q(X\backslash Y) \;\;\; \text{ and } \;\;\; R^\vee = d\,Q^\vee(X).\]

\smallskip
The given hypotheses readily imply that $R$ and $R^\vee$ are $W$-invariant. 
We now make the following important observation:
\begin{equation}\label{eq:imp-obs}
\text{Given } (w,\alpha)  \in W \times \reroots(Y), \text{ we have } w\alpha \in \bbar[w]\alpha + R \text{ and } w(\alpha^\vee) \in \bbar[w](\alpha^\vee) + R^\vee
\end{equation}
It is enough to prove this on the generators $w=s_k$ of $W$. This is obvious when $k \in Y$ and follows from equations~\eqref{eq:ddiv1}, \eqref{eq:ddiv2} when $k \in X\backslash Y$.
Now, given $\beta \in W\cdot \reroots(Y)$, say $\beta = \sigma \alpha$ for some $(\sigma, \alpha) \in W \times \reroots(Y)$, we define $\bbar := \bbar[\sigma] \alpha$. This is a real root of $Y$, and in view of \eqref{eq:imp-obs} above, the association $\beta \mapsto \bbar$ is well-defined modulo $R$. Further, if $\gamma = \tau \alpha'$ is another root in the $W$-orbit of $\reroots(Y)$, then
\begin{equation} \label{eq:form-mod-d}
  \aform{\,\bbar^\vee}{\bbar[\gamma]\,} = \aform{\,\bbar[\sigma] (\alpha^\vee)}{\,\bbar[\tau] \alpha'\,} \equiv \aform{\,\sigma (\alpha^\vee)}{\,\tau \alpha'\,} \pmod{d}
\end{equation}
The congruence modulo $d$ in this equation is an easy consequence of equation~\eqref{eq:imp-obs}, together with the observations that
\begin{align*}
  \aform{Q^\vee(X)\,}{\,R} \equiv \aform{R^\vee\,}{\,Q(Y)} \equiv 0 \pmod{d} && \text{if equation~\eqref{eq:ddiv1} holds.}\\
  \aform{R^\vee\,}{\,Q(X)} \equiv \aform{Q^\vee(Y)\,}{\,R} \equiv 0 \pmod{d} && \text{if equation~\eqref{eq:ddiv2} holds.}
\end{align*}
Finally, if $\Sigma = \{\beta_i: 1 \leq i \leq m\}$ is a multi-subset of $W\cdot \reroots(Y)$, define $\bbar[\Sigma] =  \{\bbar_i: 1 \leq i \leq m\}$.
Equation~\eqref{eq:form-mod-d} now implies $M(\Sigma) \equiv M(\bbar[\Sigma]) \pmod{d}$ as required.
\emyproof

We obtain several useful corollaries.

\begin{corollary}\label{cor:da1}
Let $X$ be a doubly-laced Dynkin diagram of a symmetrizable Kac-Moody algebra. Suppose that $\Xsh$ (respectively $\Xlong$) is of type $A_1$, i.e., is a single vertex, then there is no \pisystem of type $A_2$ in $X$ contained wholly in $\rerootsh(X)$ (respectively $\rerootlong(X)$).
\end{corollary}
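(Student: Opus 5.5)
We apply Theorem~\ref{thm:pisys-mod-d} with $d=2$ and $Y=\Xsh$ (respectively $Y=\Xlong$). Since $X$ is doubly-laced, Lemma~\ref{lem:shortlong-d-div} gives the divisibility needed. For $Y=\Xsh$ we have $2 \mid \aform{\alpha_i^\vee}{\alpha_j}$ for all $i\in Y$, $j\notin Y$, which is condition~\eqref{eq:ddiv2}. For $Y=\Xlong$ we have $2\mid \aform{\alpha_j^\vee}{\alpha_i}$, which is condition~\eqref{eq:ddiv1}.

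By definition, $\rerootsh(X)=W\cdot\Xsh$. Since $Y$ is a single vertex $p$, we also have $\reroots(Y)=\{\pm\alpha_p\}$, and hence $W\cdot\reroots(Y)=W\alpha_p\cup W(-\alpha_p)=W\alpha_p$, because $s_p\alpha_p=-\alpha_p$. So this orbit is exactly $\rerootsh(X)$, and likewise $\rerootlong(X)$ in the other case.

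Now suppose $\Sigma=\{\beta_1,\beta_2\}$ is a \pisystem of type $A_2$ contained in $\rerootsh(X)$ (respectively $\rerootlong(X)$). Then
\[M(\Sigma)=\begin{pmatrix}2&-1\\-1&2\end{pmatrix}.\]
Theorem~\ref{thm:pisys-mod-d} produces $\bbar_1,\bbar_2\in\reroots(Y)=\{\pm\alpha_p\}$ with $M(\Sigma)\equiv M(\bbar[\Sigma])\pmod 2$. Each off-diagonal entry of $M(\bbar[\Sigma])$ is $\aform{(\pm\alpha_p)^\vee}{\pm\alpha_p}=\pm2$, which is even. But the off-diagonal entries of $M(\Sigma)$ are $-1$, which is odd. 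This contradiction shows that no such $\Sigma$ exists.

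There is no real obstacle here. The only points to check are the orbit identification $W\cdot\reroots(Y)=\rerootsh(X)$ and that Lemma~\ref{lem:shortlong-d-div} matches the correct hypothesis of the theorem in each case. The \pisystem condition itself is not used; the argument works for any pair of roots in the orbit with odd pairing.
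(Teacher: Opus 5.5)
Your proof is correct and follows the paper's own route. You apply Theorem~\ref{thm:pisys-mod-d} with $d=2$ and $Y=\Xsh$ (resp.\ $\Xlong$) via Lemma~\ref{lem:shortlong-d-div}, identify $\rerootsh(X)$ (resp.\ $\rerootlong(X)$) with $W\cdot\reroots(Y)$, and observe that with $\bbar_i\in\{\pm\alpha_p\}$ every entry of $M(\bbar[\Sigma])$ is even, whereas the off-diagonal entries of the $A_2$ matrix are odd. Your explicit matching of the short (resp.\ long) case with condition~\eqref{eq:ddiv2} (resp.\ \eqref{eq:ddiv1}) is a detail the paper leaves implicit, and your remark that the $\pi$-system hypothesis is not needed agrees with the paper's note that the argument applies to any multiset of real roots in the orbit.
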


\begin{corollary}\label{cor:da2}
  Let $X$ be a doubly-laced Dynkin diagram of a symmetrizable Kac-Moody algebra. Suppose that $\Xsh$ (respectively $\Xlong$) is of type $A_2$, then there is no \pisystem
  of type $A_2 \times A_1$ in $X$ contained wholly in $\rerootsh(X)$ (respectively $\rerootlong(X)$).
\end{corollary}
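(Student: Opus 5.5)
We apply Theorem~\ref{thm:pisys-mod-d} with $d=2$ and $Y=\Xsh$ (respectively $Y=\Xlong$), in the same way as for Corollary~\ref{cor:da1}. By Lemma~\ref{lem:shortlong-d-div}, part (1) gives condition \eqref{eq:ddiv2} for $Y=\Xsh$, and part (2) gives \eqref{eq:ddiv1} for $Y=\Xlong$. We also need $\rerootsh(X) = W\cdot \Xsh \subseteq W\cdot\reroots(Y)$ (respectively for long roots). Then for any \pisystem $\Sigma=\{\beta_1,\beta_2,\beta_3\}$ of type $A_2\times A_1$ inside $\rerootsh(X)$, the theorem gives roots $\bbar_1,\bbar_2,\bbar_3\in\reroots(Y)$ with $M(\bbar[\Sigma])\equiv M(\Sigma) \pmod 2$.

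Since $Y$ is of type $A_2$, the set $\reroots(Y)$ is the finite root system $\{\pm\alpha_1,\pm\alpha_2,\pm(\alpha_1+\alpha_2)\}$. For any two roots $\gamma,\delta$ of $A_2$, the pairing $\aform{\gamma^\vee}{\delta}$ is $\pm 2$ if $\delta=\pm\gamma$ and $\pm1$ otherwise. In particular it is never $0$, so it is even exactly when $\delta=\pm\gamma$.

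Now read off the parities. The matrix $M(\Sigma)$ is the Cartan matrix of $A_2\times A_1$, so $\aform{\beta_1^\vee}{\beta_3}=\aform{\beta_2^\vee}{\beta_3}=0$ and $\aform{\beta_1^\vee}{\beta_2}=-1$. The congruence then forces $\bbar_3=\pm\bbar_1$ and $\bbar_3=\pm\bbar_2$, hence $\bbar_1=\pm\bbar_2$. This makes $\aform{\bbar_1^\vee}{\bbar_2}=\pm2$ even, contradicting $\aform{\beta_1^\vee}{\beta_2}=-1$ being odd. So no such \pisystem exists. The long-root case is identical, using \eqref{eq:ddiv1} in place of \eqref{eq:ddiv2}.

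The only step that needs care is matching each half of Lemma~\ref{lem:shortlong-d-div} to the correct divisibility condition of Theorem~\ref{thm:pisys-mod-d}. One should also note that the theorem allows multisets, so the possible coincidences $\bbar_i=\pm\bbar_j$ cause no difficulty. Everything else is a finite parity check in the $A_2$ root system.
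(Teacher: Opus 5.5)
Your proposal is correct and follows essentially the paper's argument. You apply Theorem~\ref{thm:pisys-mod-d} with $d=2$ and $Y=\Xsh$ (resp.\ $\Xlong$) via Lemma~\ref{lem:shortlong-d-div}, matching part (1) with \eqref{eq:ddiv2} and part (2) with \eqref{eq:ddiv1}, which is correct; then the $A_2$ fact that $\aform{\alpha^\vee}{\beta}$ is even iff $\beta=\pm\alpha$ forces $\bbar_1=\pm\bbar_2$, contradicting the odd entry $-1$, exactly as in the paper.
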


\begin{corollary}\label{cor:ta1}
  Let $X$ be a triply-laced Dynkin diagram of a symmetrizable Kac-Moody algebra. Suppose that $\Xsh$ (respectively $\Xlong$) is of type $A_1$, then there is no \pisystem of type
  $A_1 \times A_1$ in $X$ contained wholly in $\rerootsh(X)$ (respectively $\rerootlong(X)$).
\end{corollary}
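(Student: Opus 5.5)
Corollary~\ref{cor:ta1} follows from Theorem~\ref{thm:pisys-mod-d} with $d=3$ and $Y=\Xsh$ (respectively $Y=\Xlong$). Since $X$ is triply-laced, Lemma~\ref{lem:shortlong-d-div} gives hypothesis \eqref{eq:ddiv2} for $Y=\Xsh$ (part (1)) and hypothesis \eqref{eq:ddiv1} for $Y=\Xlong$ (part (2)). We have $\rerootsh(X)=W\cdot\Xsh$, and because $Y$ is a single vertex $p$, $\reroots(Y)=\{\pm\alpha_p\}$. Hence $W\cdot\reroots(Y)=\rerootsh(X)$, using that $-\alpha_p=s_p\alpha_p$ also lies in $W\alpha_p$. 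The long case is identical.

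Now suppose $\Sigma=\{\beta_1,\beta_2\}\subset\rerootsh(X)$ is a \pisystem of type $A_1\times A_1$, so that
\[
M(\Sigma)=\begin{pmatrix}2&0\\0&2\end{pmatrix}.
\]
Theorem~\ref{thm:pisys-mod-d} produces $\bbar_1,\bbar_2\in\{\pm\alpha_p\}$ with $M(\Sigma)\equiv M(\bbar[\Sigma])\pmod 3$. The off-diagonal entry of $M(\bbar[\Sigma])$ is $\aform{\bbar_1^\vee}{\bbar_2}=\pm\aform{\alpha_p^\vee}{\alpha_p}=\pm 2$. This would force $0\equiv\pm2\pmod 3$, which is a contradiction. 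So no such \pisystem exists.

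There is no real obstacle here. The only points needing care are the following. First, the correct one of \eqref{eq:ddiv1} and \eqref{eq:ddiv2} must be matched to the short versus long case via Lemma~\ref{lem:shortlong-d-div}. Second, the multiset $\bbar[\Sigma]$ may have repeated elements, but this is harmless since the theorem is stated for multisets. The same template (with $d=2$, and $\reroots(Y)$ equal to the roots of $A_1$ or $A_2$) handles Corollaries~\ref{cor:da1} and \ref{cor:da2}. In those cases one checks that no matrix of the form $M(\bbar[\Sigma])$, for $\bbar[\Sigma]$ drawn from the roots of $A_1$ or $A_2$, is congruent mod $2$ to the Cartan matrix of $A_2$ or $A_2\times A_1$.
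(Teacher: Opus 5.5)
Your proof is correct and follows the paper's route. The paper writes out only Corollary~\ref{cor:da2} and calls the others similar; you apply Theorem~\ref{thm:pisys-mod-d} with $d=3$ and $Y=\Xsh$ (resp.\ $\Xlong$), correctly matching \eqref{eq:ddiv2} and \eqref{eq:ddiv1} via Lemma~\ref{lem:shortlong-d-div}, and then note that the off-diagonal entry of $M(\bbar[\Sigma])$ for $\bbar[\Sigma]\subset\{\pm\alpha_p\}$ is $\pm 2$, which cannot be congruent to $0$ mod $3$.
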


We indicate how to prove Corollary~\ref{cor:da2}, the others being similar. Lemma~\ref{lem:shortlong-d-div} allows us to apply Theorem~\ref{thm:pisys-mod-d} with $Y=\Xsh$ (or $\Xlong$) and $d=2$. The set of shortest (or longest) real roots of $X$ is nothing but $W \cdot \reroots(Y)$. Given any \pisystem (in fact any multiset of real roots) $\Sigma$ of $X$ contained wholly in the Weyl group orbit of $\reroots(Y)$, we obtain the multisubset $\bbar[\Sigma]$ of $\reroots(Y)$ such that $M(\Sigma)$ coincides with $M(\bbar[\Sigma])$ modulo $d=2$. For $Y$ of type $A_2$, it only remains to verify that no such multisubset exists if we take $M(\Sigma)$ to be the GCM of type $A_2 \times A_1$, i.e., the matrix
\[M =\begin{pmatrix} 2 & -1 & 0 \\ -1 &2 & 0 \\ 0 & 0 &2 \end{pmatrix}\]
So let $\bbar[\Sigma]=\{\bbar_1, \bbar_2, \bbar_3\}$ be such that $M(\bbar[\Sigma])$ is congruent to $M$ mod 2. We observe that the root system of type $A_2$ has the property that given two (real) roots $\alpha, \beta$, we have $\aform{\alpha^\vee}{\beta}$ is even if and only if $\beta = \pm \alpha$. Since the third row and column of $M$ is zero mod 2, we conclude that $\bbar_1$ and $\bbar_2$ must both be of the form $\pm \bbar_3$. But this would imply $\aform{\beta^\vee_1}{\beta_2}$ is also even, which is a contradiction. \qed

In many situations, corollaries~\ref{cor:da1}--\ref{cor:ta1} give us {\em forbidden diagrams} which cannot occur as subdiagrams of Dynkin diagrams of  \pisystems. For example, we have:

  \begin{proposition}\label{prop:forb}
    Let $X$ be a doubly-laced Dynkin diagram of a symmetrizable Kac-Moody algebra with exactly two real root lengths. Let $Y$ be the Dynkin diagram of a linearly independent \pisystem of $X$.
    \begin{enumerate}
    \item Suppose $\Xsh$ (respectively $\Xlong$) is of type $A_1$, then the finite diagram $C_3$ (respectively $B_3$) cannot occur as a subdiagram of $Y$.
    \item Suppose $\Xsh$ is of type $A_2$, then the finite diagram $C_5$ and the hyperbolic diagram $HD_3^{(2)}$ ($=\Gamma_{163}$ in Table I) cannot occur as subdiagrams of $Y$.
    \item Likewise, if $\Xlong$ is of type $A_2$, then the finite diagram $B_5$ and the hyperbolic diagram $HC_2^{(1)}$ ($=\Gamma_{162}$ in Table I) cannot occur as subdiagrams of $Y$.
    \end{enumerate}
  \end{proposition}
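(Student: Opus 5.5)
The plan is to argue by contradiction. Suppose the forbidden diagram $Z$ occurs as a subdiagram of $Y$. Then the corresponding subset $\Sigma' \subset \Sigma$ is itself a linearly independent \pisystem of type $Z$ in $X$. Inside $Z$ I will locate a sub-\pisystem of type $A_2$, $A_2\times A_1$ or $A_1 \times A_1$. Its roots must all have the same length, and because $X$ has exactly two real root lengths, they must all lie in $\rerootsh(X)$ or all in $\rerootlong(X)$. This contradicts Corollary~\ref{cor:da1} or~\ref{cor:da2}.

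The key step is matching root lengths. Since $X$ has exactly two real root lengths and is doubly-laced, the long-to-short squared-length ratio is $2$. By the form-preserving property of \S\ref{sec:symm-gcm}, the squared lengths of the elements of $\Sigma'$ are proportional to those of the simple roots of $Z$. The squared-length ratios of the simple roots of $Z$ are themselves only $1$ and $2$, because every edge of $Z$ is single or double. A root of $\Sigma'$ attached to the short end of $Z$ therefore cannot be long in $X$: if it were, its partner would need squared length $2\cdot$(long) in $X$, which is impossible. So the short roots of $Z$ go to $\rerootsh(X)$ and the long roots of $Z$ go to $\rerootlong(X)$.

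The case analysis then runs as follows.
\begin{enumerate}
\item For $C_3$ (two short roots joined by a single edge, followed by a double edge to one long root), the two short roots form a sub-\pisystem of type $A_2$ inside $\rerootsh(X)$. Corollary~\ref{cor:da1} forbids this when $\Xsh$ is of type $A_1$. Dually, $B_3$ has two long roots forming an $A_2$, which lie in $\rerootlong(X)$ and are forbidden when $\Xlong$ is of type $A_1$.
\item For $C_5$, the four short roots form an $A_4$. Taking the first, second and fourth of them gives a sub-\pisystem of type $A_2\times A_1$ inside $\rerootsh(X)$, which Corollary~\ref{cor:da2} forbids. Item (3) with $B_5$ is the same argument applied to its long roots in $\rerootlong(X)$.
\item For the hyperbolic diagrams $HD_3^{(2)}$ and $HC_2^{(1)}$, I will read off from Table I that the short (resp.\ long) vertices contain three vertices inducing an $A_2\times A_1$ subdiagram. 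For example, $HD_3^{(2)}$ is $D_3^{(2)}$ extended by a short vertex attached to a short end. Applying Corollary~\ref{cor:da2} again finishes the argument.
\end{enumerate}
Any subset of a \pisystem is again a \pisystem, and its type is the induced principal submatrix, so each of these sub-\pisystems has the claimed type.

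I expect the main obstacle to be the length-matching step in its rigorous form. The set $\Sigma'$ need not be connected to anything of fixed length, so the argument has to use only the internal ratios of $Z$. If $Z$ has a double edge, the ratio $2$ forces the short roots of $Z$ to be short in $X$. If some short vertex were not tied to a double edge, it would still be connected by single edges to vertices of equal length, so it inherits the same class.

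The second delicate point is checking the precise shape of $HD_3^{(2)}$ and $HC_2^{(1)}$ from the table. I must confirm that an induced $A_2\times A_1$ lies among the short vertices (resp.\ the long vertices), with the orientation consistent with items (2) and (3). If it does not, I will fall back on Corollary~\ref{cor:da1} applied to an $A_2$, or on a direct mod-$2$ comparison through Theorem~\ref{thm:pisys-mod-d}.
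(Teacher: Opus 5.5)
Your proposal is correct and follows the paper's route: inside the forbidden diagram you take a sub-\pisystem of type $A_2$ (for $C_3$, $B_3$) or of type $A_2\times A_1$ (for $C_5$, $B_5$, $HD_3^{(2)}$, $HC_2^{(1)}$), consisting of its short (respectively long) roots, and then invoke Corollary~\ref{cor:da1} or~\ref{cor:da2}. Your explicit length-matching step (a double edge in the connected diagram forces squared-length ratio $2$, and $X$ has only two lengths) is exactly what the paper compresses into ``since $X$ only has two root lengths''. Your readings of $\Gamma_{163}$ (short vertices $\alpha_{-1},\alpha_0,\alpha_2$ forming $A_2\times A_1$) and $\Gamma_{162}$ (long vertices forming $A_2\times A_1$) are right, so the fallback you mention is not needed.
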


  The proof is a simple application of Corollaries~\ref{cor:da1} and \ref{cor:da2}. We demonstrate one of the cases and leave the rest to the reader. The diagram $C_3$ has two short simple roots which form an $A_2$ diagram. If $Y$ contained $C_3$ as a subdiagram (or even a \pisystem of $C_3$ type), it would imply that $X$ contains a \pisystem of $A_2$ type consisting only of short roots (since $X$ only has two root lengths). This contradicts corollary~\ref{cor:da1}.

  As is clear from the discussion above, Proposition~\ref{prop:forb} is only a small sample of the possibilities - one can write down many other forbidden diagrams in the doubly- and triply-laced cases. One can also replace the condition $\Xsh=A_1$ or $A_2$ in corollaries ~\ref{cor:da1} and \ref{cor:da2} by $\Xsh=A_n$ for other $n \geq 3$. The resulting analysis is more involved, but forbidden diagrams can be extracted in these cases too; for $n=4$ the configurations that one needs to consider are related to Petersen's graph on 10 vertices - this will be considered in greater detail in future work.

\medskip

\noindent
\begin{tikzpicture}[scale=.19]
	
	\begin{scope}[shift={(-5,0)}]
		\draw (11,0) node[below] {} circle (.3cm);
		\draw (14,0) node[below] {} circle (.3cm);
		\draw (17,0) node[below] {} circle (.3cm);
		\draw (20,0) node[below] {} circle (.3cm);
		\draw (11.3,0) -- +(2.4,0);
		\draw (14.5,0.15) -- +(2.25,0);
		\draw (14.5,-0.15) -- +(2.25,0);
		\draw (17.3,0.15) -- +(2.25,0);
		\draw (17.3,-0.15) -- +(2.25,0);
		
		\draw[thick] (19.7,0) -- +(-.4,.4);
		\draw[thick] (19.7,0) --+ (-.4,-.4);
		
		\draw[thick] (14.3,0) -- +(.4,.4);
		\draw[thick] (14.3,0) --+ (.4,-.4);
		\draw (16,-3.5) node{$\Gamma_{163}$};
	\end{scope}
	\begin{scope}[shift = {(20,0)}]
		\draw (0,0.2) node{$\stackrel{}{\npreceq}$};
		\draw (13,-3.5) node{$\Gamma_{219}$};
		\draw (5,0) node[below] {} circle (.3cm);
		\draw (8,0) node[below] {} circle (.3cm);
		\draw (11,0) node[below] {} circle (.3cm);
		\draw (14,0) node[below] {} circle (.3cm);
		\draw (17,0) node[below] {} circle (.3cm);
		\draw (20,0) node[below] {} circle (.3cm);

		\draw (5.3,0) -- +(2.4,0);
		\draw (8.3,0) -- +(2.4,0);
		\draw (11.3,0) -- +(2.4,0);
		\draw (14.3,0.15) -- +(2.25,0);
		\draw (14.3,-0.15) -- +(2.25,0);
		\draw (17.3,0) -- +(2.4,0);

		\draw[thick] (16.7,0) -- +(-.4,.4);
		\draw[thick] (16.7,0) --+ (-.4,-.4);

	\end{scope}

	\begin{scope}[shift = {(45,0)}]
		\draw (16,-3.5) node{$\Gamma_{162}$};
		\draw (11,0) node[below] {} circle (.3cm);
		\draw (14,0) node[below] {} circle (.3cm);
		\draw (17,0) node[below] {} circle (.3cm);
		\draw (20,0) node[below] {} circle (.3cm);
		\draw (11.3,0) -- +(2.4,0);
		\draw (14.3,0.15) -- +(2.25,0);
		\draw (14.3,-0.15) -- +(2.25,0);
		\draw (17.5,0.15) -- +(2.25,0);
		\draw (17.5,-0.15) -- +(2.25,0);
		
		\draw[thick] (17.3,0) -- +(.4,.4);
		\draw[thick] (17.3,0) --+ (.4,-.4);
		
			\draw[thick] (16.7,0) -- +(-.4,.4);
		\draw[thick] (16.7,0) --+ (-.4,-.4);

	\end{scope}

	\begin{scope}[shift = {(69,0)}]
		\draw (0,0.3) node{$\stackrel{}{\npreceq}$};
		\draw (13,-3.5) node{$\Gamma_{218}$};
		  \draw (5,0) node[below] {} circle (.3cm);
		\draw (8,0) node[below] {} circle (.3cm);
		\draw (11,0) node[below] {} circle (.3cm);
		\draw (14,0) node[below] {} circle (.3cm);
		\draw (17,0) node[below] {} circle (.3cm);
		\draw (20,0) node[below] {} circle (.3cm);

		  \draw (5.3,0) -- +(2.4,0);
		\draw (8.3,0) -- +(2.4,0);
		\draw (11.3,0) -- +(2.4,0);
		\draw (14.5,0.15) -- +(2.25,0);
		\draw (14.5,-0.15) -- +(2.25,0);
		\draw (17.3,0) -- +(2.4,0);
		
		\draw[thick] (14.3,0) -- +(.4,.4);
		\draw[thick] (14.3,0) --+ (.4,-.4);

	\end{scope}

\end{tikzpicture}

\subsection{Example:} Let $X$ be a doubly-laced Dynkin diagram of a symmetrizable Kac-Moody algebra. Suppose that $\Xsh$ is of type $A_2$, then there is no \pisystem of type $HA_1^{(1)}$ in $X$ contained wholly in $\rerootsh(X)$. The $GCM$ corresponding to $HA_1^{(1)}$ is same as that of $A_2 \times A_1$ (mod 2). So by Corollary \ref*{cor:da2}, there is no \pisystem of type $HA_1^{(1)}$ in $X_{short}$. However in contrast, $A_1^{(1)}$ can appear inside $X_{short}$ when $X=HF_4^{(1)}$. Likewise the rank 2 hyperbolic with the $GCM$ \[M =\begin{pmatrix} 2 & -3 \\ -3 &2 \end{pmatrix}\] cannot occur in cases mentioned in Corollaries \ref*{cor:da1} or \ref{cor:ta1} - since the $GCM$ is same as that of $A_2$ (mod 2) and $A_1 \times A_1$ (mod 3).

\subsection{} In addition to the general assertions of Corollaries \ref{cor:da1}, \ref{cor:da2} and \ref{cor:ta1},  we also have the following two lemmas that only apply when the ambient Lie algebra is of finite, affine or  hyperbolic type.
\begin{lemma}\label{lem:ta1-hyp}
  Suppose $X$ is a triply-laced Dynkin diagram of finite, affine or hyperbolic type. Suppose $\Xsh$ is of type $A_1$, then there is no \pisystem of type $A_2$ in $X$ contained wholly in $\rerootsh(X)$.
\end{lemma}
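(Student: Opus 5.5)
The plan is to combine Theorem~\ref{thm:pisys-mod-d} (with $d=3$) with the explicit description \eqref{eq:reroot-hyp} of real roots, which the paper has already quoted for finite, affine and hyperbolic type. The extra hypothesis on the type of $X$ is used only in this second step. Let $\Xsh=\{p\}$ and suppose, for contradiction, that $\Sigma=\{\beta_1,\beta_2\}\subset\rerootsh(X)=W\alpha_p$ is a \pisystem of type $A_2$, so $\aform{\beta_1^\vee}{\beta_2}=-1$. Fix a standard form with $|\alpha_p|^2=2$. Since $X$ is triply-laced, every other simple root then has $|\alpha_j|^2\in\{2,6\}$.

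\emph{Step 1 (mod $3$ reduction).} By Lemma~\ref{lem:shortlong-d-div}, hypothesis \eqref{eq:ddiv2} holds with $Y=\{p\}$ and $d=3$. In the proof of Theorem~\ref{thm:pisys-mod-d} this gives $R=3\,\integers\alpha_p\oplus Q(X\backslash Y)$. By \eqref{eq:imp-obs}, each $\beta_i$ satisfies $\beta_i\equiv\bbar_i \pmod R$ with $\bbar_i\in\reroots(Y)=\{\pm\alpha_p\}$. Concretely, the coefficient $c_i$ of $\alpha_p$ in $\beta_i$ is congruent mod $3$ to the sign $\epsilon_i$ with $\bbar_i=\epsilon_i\alpha_p$.

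The theorem also gives
\[
-1=\aform{\beta_1^\vee}{\beta_2}\equiv\aform{\bbar_1^\vee}{\bbar_2}=2\epsilon_1\epsilon_2 \pmod 3 .
\]
This forces $\epsilon_1\epsilon_2=1$, so $\bbar_1=\bbar_2$ and hence $c_1\equiv c_2\pmod 3$. The point is that the mod-$3$ congruence alone does not rule out $A_2$, since $2\equiv-1$. It does, however, pin down the residue of the $\alpha_p$-coefficient of $\beta_1-\beta_2$.

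\emph{Step 2 (exhibit a root).} Put $\gamma=\beta_1-\beta_2\in Q(X)$. Then
\[
|\gamma|^2=2+2-2\form{\beta_1}{\beta_2}=4+2=6>0,
\]
so $\gamma$ has exactly the long root length. Write $\gamma=\sum_j k_j\alpha_j$ and check the integrality condition in \eqref{eq:reroot-hyp}, namely $k_j|\alpha_j|^2/6\in\integers$:
\begin{itemize}
\item for $j\neq p$ we have $|\alpha_j|^2=6$, so the condition is trivially satisfied;
\item for $j=p$ it reads $k_p/3\in\integers$, which holds because $k_p=c_1-c_2\equiv0\pmod 3$ by Step 1.
\end{itemize}
Hence $\gamma\in\reroots(X)$ (a long real root), contradicting the \pisystem condition $\beta_1-\beta_2\notin\roots$.

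The main obstacle is justifying \eqref{eq:reroot-hyp} in all three types. For hyperbolic $X$ this is \cite[Prop 5.10]{kac}. In the finite case only $G_2$ occurs, and it can be checked directly. In the affine case ($G_2^{(1)}$, $D_4^{(3)}$) I would use the explicit description of real roots as $\overline{\alpha}+n\delta$ from \cite[Ch.~6]{kac} to confirm that $\gamma$, having long length and $\alpha_p$-coefficient divisible by $3$, is indeed a real root. The paper's statement of \eqref{eq:reroot-hyp} already asserts this for all three types, so I would cite it, with the affine verification as a sanity check.
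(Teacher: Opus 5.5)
Your proof is correct and is essentially the paper's argument: the mod-$3$ reduction \eqref{eq:imp-obs} with $Y=\{p\}$ forces the $\alpha_p$-coefficient of $\beta_1-\beta_2$ to be divisible by $3$. Together with $|\beta_1-\beta_2|^2=3|\alpha_p|^2$, \eqref{eq:reroot-hyp} then makes $\beta_1-\beta_2$ a real root; the paper differs only in first normalizing $\beta_1=\alpha_p$, and its sign argument $\aform{\beta_2}{\beta_1^\vee}\in\pm 2+3\integers$ is your $\epsilon_1=\epsilon_2$.

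One slip: a triply-laced $X$ can have three root lengths. For example, the rank-$3$ hyperbolic chain with two triple bonds oriented the same way has squared lengths in ratio $1:3:9$. So for $j\neq p$ you only know $|\alpha_j|^2=2\cdot 3^{m_j}$ with $m_j\geq 1$, not $|\alpha_j|^2=6$. This still gives $k_j|\alpha_j|^2/6\in\integers$, so your argument goes through, matching the paper's ``nonzero power of $3$''.
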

\begin{proof}
  Let $p$ denote the vertex of $X$ such that $\Xsh=\{p\}$. We normalize the standard invariant form on $X$ such that $|\alpha_p|^2 = \displaystyle\min_{j \in X} |\alpha_j|^2=1$. Since $X$ is triply-laced, $|\alpha_j|^2$ is a nonzero power of $3$ for all $j \neq p$. Now suppose $\Sigma=\{\beta_1, \beta_2\}$ is a \pisystem of type $A_2$ in $X$ such that $\Sigma \subset \rerootsh(X) = W\alpha_p$ (the Weyl group orbit of $\alpha_p$). Applying an element of $W$ if necessary, we can assume $\beta_1=\alpha_p$.
By the arguments used in the proof of Theorem~\ref{thm:pisys-mod-d}, specifically equation~\eqref{eq:imp-obs}, we obtain:
\[ \beta_2=\pm\alpha_p+\gamma\]
for some $\gamma\in R$, where $R =  \integers \,(3\alpha_p) \oplus \bigoplus_{j \neq p} \integers \alpha_j $. Thus
\[\langle\beta_2, \beta_1^\vee\rangle=\pm\langle \alpha_p, \alpha_p^\vee\rangle + \langle \gamma, \alpha_p^\vee \rangle \in \pm 2 + 3\,\mathbb{Z}\]
since $\langle \alpha_j, \alpha_p^\vee\rangle =0$ or $-3$ for all $j \neq p$.
Now $\Sigma$ has type $A_2$, so $\langle\beta_2, \beta_1^\vee\rangle =-1$.
We must have $\beta_2=\alpha_p+\gamma$, with $\langle\gamma, \alpha_p^\vee\rangle=-3$.
We compute:
\[|\beta_2|^2 =|\alpha_p|^2 + |\gamma|^2 + 2\form{\alpha_p}{\gamma} = |\alpha_p|^2 + |\gamma|^2 + \aform{\gamma}{\alpha_p^\vee}\]
since $|\alpha_p|^2=1$. Since $\beta_2$ is $W$-conjugate to $\alpha_p$, their norms coincide, and we obtain $|\gamma|^2=-\aform{\gamma}{\alpha_p^\vee}=3$. We write
\[\gamma=3k_p\,\alpha_p+\sum\limits_{j\neq p} k_j\,\alpha_j\]
where the $k_{\bullet}$ are integers. We observe that $\frac{3k_p|\alpha_p|^2}{|\gamma|^2} = k_p\in\mathbb{Z}$.
For $j \neq p$, $\frac{k_j|\alpha_j|^2}{|\gamma|^2} =  \frac{k_j|\alpha_j|^2}{3}\in\mathbb{Z}$ since $3$ divides $|\alpha_j|^2$. Since $X$ is of finite, affine or hyperbolic type, we use equation~\eqref{eq:reroot-hyp} to conclude that $\gamma$ is a real root of $X$. But $\gamma= \beta_2 - \beta_1$, which contradicts the fact that $\Sigma$ is a \pisystem.
\end{proof}

\begin{lemma} \label{lem:da2-hyp}
Suppose $X$ is a doubly-laced Dynkin diagram of finite, affine or hyperbolic type. Suppose $\Xsh$ is of type $A_1$ or $A_2$, then there is no \pisystem of type $A_1 \times A_1$ in $X$ contained wholly in $\rerootsh(X)$.
\end{lemma}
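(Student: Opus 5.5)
We argue as in the proof of Lemma~\ref{lem:ta1-hyp}, now with $d=2$ and $Y=\Xsh$. By Lemma~\ref{lem:shortlong-d-div}, condition \eqref{eq:ddiv2} of Theorem~\ref{thm:pisys-mod-d} holds, so we may use \eqref{eq:imp-obs} with
\[
R = 2\,Q(Y)\oplus Q(X\backslash Y).
\]
Normalize the form so that the short simple roots have $|\alpha_p|^2=1$. Since $X$ is doubly-laced, every $|\alpha_j|^2$ with $j\notin Y$ is then a positive power of $2$, hence even. Suppose $\Sigma=\{\beta_1,\beta_2\}\subset\rerootsh(X)=W\cdot\reroots(Y)$ is a \pisystem of type $A_1\times A_1$. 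All short real roots of $Y$ are $W(Y)$-conjugate, so after applying an element of $W$ we may assume $\beta_1=\alpha_p$ for some $p\in Y$.

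By \eqref{eq:imp-obs}, $\beta_2=\bbar+\gamma$ with $\bbar\in\reroots(Y)$ and $\gamma\in R$. We first show $\bbar=\pm\alpha_p$. Every element of $R$ pairs evenly with $\alpha_p^\vee$: this is clear for $2Q(Y)$, and for $j\notin Y$ it follows from Lemma~\ref{lem:shortlong-d-div}(1). Since $\aform{\alpha_p^\vee}{\beta_2}=0$, we get that $\aform{\alpha_p^\vee}{\bbar}$ is even. In $A_1$ this is automatic. In $A_2$ the only roots whose pairing with $\alpha_p^\vee$ is even are $\pm\alpha_p$ (the others give $\pm1$). Hence $\bbar=\epsilon\alpha_p$ with $\epsilon=\pm1$, and so $\aform{\gamma}{\alpha_p^\vee}=-2\epsilon$.

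Next we compute the norm of $\gamma$. Using $|\alpha_p|^2=1$ we have $2\form{\alpha_p}{\gamma}=\aform{\gamma}{\alpha_p^\vee}$. Then
\[
|\beta_2|^2 = 1 + |\gamma|^2 + \epsilon\,\aform{\gamma}{\alpha_p^\vee} = |\gamma|^2-1.
\]
Since $\beta_2$ is short, $|\beta_2|^2=1$, which forces $|\gamma|^2=2$.

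Now write $\gamma=\sum_{i\in Y}2k_i\alpha_i+\sum_{j\notin Y}k_j\alpha_j$ and check the criterion \eqref{eq:reroot-hyp}, which is valid because $X$ is of finite, affine or hyperbolic type. For $i\in Y$ the coefficient ratio is $2k_i\cdot 1/2=k_i\in\integers$. For $j\notin Y$ it is $k_j|\alpha_j|^2/2\in\integers$, because $|\alpha_j|^2$ is even. Together with $|\gamma|^2=2>0$ this shows $\gamma$ is a real root.

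It remains to derive a contradiction in each sign case.
\begin{itemize}
\item If $\epsilon=1$, then $\beta_2-\beta_1=\gamma$ is a root, contradicting that $\Sigma$ is a \pisystem.
\item If $\epsilon=-1$, then $\aform{\gamma}{\alpha_p^\vee}=2$, so $\beta_2-\beta_1=\gamma-2\alpha_p=s_p(\gamma)$, which is again a root. This is the same contradiction.
\end{itemize}

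The main obstacle is the step $\bbar=\pm\alpha_p$ in the $A_2$ case. It relies on the parity of $\aform{R}{\alpha_p^\vee}$ and on the fact that in the $A_2$ root system only $\pm\alpha_p$ pair evenly with $\alpha_p^\vee$. The rest mirrors Lemma~\ref{lem:ta1-hyp}; the only new point is the sign case $\epsilon=-1$, which is handled by applying $s_p$.
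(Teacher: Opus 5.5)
Your proof is correct and follows the paper's argument essentially step for step: you reduce to $\beta_1=\alpha_p$, use the parity of pairings with $\alpha_p^\vee$ to write $\beta_2=\pm\alpha_p+\gamma$ with $\gamma\in R$, show $|\gamma|^2=2$, and then use the real-root criterion \eqref{eq:reroot-hyp} to conclude that $\beta_2-\beta_1$ is a root. The only differences are cosmetic: the paper absorbs the sign at once via $\alpha_p\equiv-\alpha_p \pmod{R}$, whereas you keep the case $\epsilon=-1$ and handle it with $s_p(\gamma)=\gamma-2\alpha_p$; and you treat the $A_1$ and $A_2$ cases together, where the paper leaves $A_1$ as ``similar''.
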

\begin{proof}
We prove it in the case that $\Xsh$ is of type $A_2$, the other case being similar. So, let $\Xsh=\{p,q\}$ and let $\{\beta_1,\beta_2\}$ be two elements in the $W$-orbit of $\{\alpha_p,\alpha_q\}$ which form a \pisystem of type $A_1 \times A_1$.   Applying an element of $W$ and interchanging $p,q$ if necessary, we can assume $\beta_1=\alpha_p$. By the arguments used in the proof of Theorem~\ref{thm:pisys-mod-d}, we obtain:
  \[ \beta_2= \alpha +\gamma\]
for some $\alpha \in \reroots(\Xsh)$ and $\gamma\in R$ where $R =  2\,Q(\Xsh) \oplus Q(X\backslash \Xsh)$.
We have
\[ 0 = \aform{\beta_2}{\beta^\vee_1} = \aform{\alpha}{\alpha_p^\vee} + \aform{\gamma}{\alpha_p^\vee} \in  \aform{\alpha}{\alpha_p^\vee} + 2 \integers\]
As in the proof of Corollary~\ref{cor:da2}, we note that $\aform{\alpha}{\alpha_p^\vee}$ is even if and only if $\alpha = \pm \alpha_p$. Since $\alpha_p \equiv -\alpha_p \pmod{R}$, we may assume $\beta_2 = \alpha_p + \gamma$. We conclude $ \aform{\gamma}{\alpha_p^\vee} = -2$. Normalizing the standard invariant form such that $|\alpha_p|^2 = |\alpha_q|^2 =1$, we compute: $|\beta_2|^2 = |\alpha_p|^2 + |\gamma|^2 + \aform{\gamma}{\alpha_p^\vee}$. As before, this implies $|\gamma|^2 = -\aform{\gamma}{\alpha_p^\vee} = 2$. Letting:
\[\gamma=2k_p\,\alpha_p+2k_q\,\alpha_q+\sum\limits_{j\neq p,q}k_j\,\alpha_j\]
we obtain: (i) $\frac{2k_p|\alpha_p|^2}{|\gamma|^2} =k_p \in \integers$, (ii) $\frac{2k_q|\alpha_q|^2}{|\gamma|^2} = k_q \in \integers$, and (iii) $\frac{k_j|\alpha_j|^2}{|\gamma|^2} = \frac{k_j|\alpha_j|^2}{2}\in\mathbb{Z}$ for each $j\neq p,q$, since in this case $|\alpha_j|^2$ is a nonzero power of 2. Equation~\eqref{eq:reroot-hyp} implies $\gamma$ is a real root of $X$, contradicting the fact that $\{\beta_1, \beta_2\}$ was a \pisystem to begin with.
\end{proof}

\subsection{} \label{sec:rem-sh-long}
We note that neither of the above lemmas holds if `short' is replaced by `long'. For example:
\be
\item If $X=G_2$, then $\Xlong$ is of type $A_1$. But the set of all long roots forms a {\em closed subroot system} isomorphic to $A_2$; a \pisystem of type $A_2$ in $G_2$ consisting entirely of long roots is $\{\alpha_1, \alpha_1 + 3\alpha_2\}$ where $\alpha_1, \alpha_2$ are respectively the long and short simple roots of $G_2$.
\smallskip\item If $X=B_3$, then $\Xlong =\{p,q\}$ (say) is of type $A_2$. Consider $\Sigma = \{-\theta\} \cup \{\alpha_p, \alpha_q\}$ where $\theta$ is the highest root of $X$. This forms a \pisystem consisting entirely of long roots; it has type $A_3$, and hence contains a subsystem of type $A_1 \times A_1$. 
\ee

\section{\bf{Sufficient conditions for $B \preceq A$: explicit constructions}}\label{suff}

In this section we will develop some principles for explicitly constructing \pisystems in a given Dynkin diagram. These are generalizations of the principles developed in \cite{svis-e10} for simply-laced diagrams. While these principles are widely applicable, we will demonstrate them by constructing many examples of \pisystems of hyperbolic type in hyperbolic Dynkin diagrams.

\subsection{}\label{sec:principles}
All our principles below are instances of the following simple, but powerful method of constructing \pisystems.

{\bf General principle:} Let $X$ be the Dynkin diagram of a symmetrizable GCM. Let $\Lambda$ denote a proper subdiagram of $X$ and let $\Lambda'$ be the subdiagram formed by the vertices not in $\Lambda$. Let $\Sigma, \Sigma'$ be \pisystems in $\Lambda, \Lambda'$ respectively, {\em consisting of positive real roots}. Then $\Sigma \cup \Sigma'$ is a \pisystem in $X$. 

This principle follows from the observations that (i) the (real) roots of a subdiagram are precisely the (real) roots of the ambient diagram that are supported on the subdiagram, (ii) the difference of two positive roots with disjoint supports will have coefficients of mixed sign, and can therefore not be a root.
In all our applications below, we will always take $\Sigma'$ to consist of the set of all simple roots of $\Lambda'$.

Observe that the GCM of $\Sigma \cup \Sigma'$ is of the form
\begin{equation} \label{eq:gcmb}
  \begin{bmatrix} B & * \\ * & B' \end{bmatrix}
\end{equation}
where $B, B'$ are the GCMs of $\Sigma, \Sigma'$ respectively. The terms denoted $*$ are of the form $2{\form{\beta_1}{\beta_2}}/{\form{\beta_2}{\beta_2}}$ where $\beta_1 \in \Lambda, \beta_2 \in \Lambda'$ or vice versa.  We now isolate some special instances of this general principle, which will be used repeatedly in the sequel.

\subsection{Principle A:} Let $Y$ be an affine Dynkin diagram, twisted or untwisted, but $Y \neq A_{2l}^{(2)}$. Let $\{\alpha_0,\cdots,\alpha_n\}$ denote the simple roots of $Y$. Let $\overline{Y}$ denote the underlying finite type diagram, obtained from $Y$ by deleting the node corresponding to $\alpha_0$.

Let $X$ be the diagram obtained by adding an extra vertex to $Y$, which is connected only to $\alpha_0$, and by a single edge. Since $Y$ is symmetrizable, so is $X$. We denote the simple root corresponding to this vertex $\alpha_{-1}$.
Let $A=(a_{ij})$ denote the GCM of $X$; thus $a_{ij} = {2\form{\alpha_i}{\alpha_j}}/{\form{\alpha_i}{\alpha_i}}$ for $-1 \leq i,j \leq n$
(we note in passing that when $Y$ is simply-laced, $X$ is of $\Ext$ type). Let $\delta_Y$ denote the null root of $Y$, so $\delta_Y=\sum_{i=0}^{n} a_i\alpha_i$ with $a_i \in \mathbb{N}$.
We let $s_i$ denote the reflection corresponding to the simple root $\alpha_i$.

Since $Y$ is an affine diagram other than $A_{2l}^{(2)}$, we have $a_0=1$ \cite[Chapter 4, Tables Aff 1-3]{kac}. In the general principle, we take the subdiagram $\Lambda=Y$ and $\Lambda'$ to be the singleton set containing the vertex $(-1)$.  Define $\Sigma$ to be the \pisystem in $Y$ of type $Y$ comprising the roots $\{s_0\,\gamma_i: 0 \leq i \leq n\}$ where the $\gamma_i$ are given by:
\[ \gamma_0 = \alpha_0+\delta_Y,\;\; \gamma_j = \al_j \; (j\geq 1)\]
When $Y$ is twisted, $\alpha_0$ is a short root and hence $\alpha_0 + \delta_Y$ is a root in this case; it is of course a root when $Y$ is untwisted. 
Define $\Sigma' = \{\alpha_{-1}\}$; this is clearly of finite type $A_1$.
We let $\Sigma \cup \Sigma' = \{\beta_i: -1 \leq i \leq n\}$ with $\beta_{-1} = \alpha_{-1}$ and $\beta_i = s_0 \, \gamma_i$ for $i \geq 1$.
All the hypotheses of the general principle are satisfied. As observed in equation~\eqref{eq:gcmb}, to find the type of $\Sigma \cup \Sigma'$, it only remains to compute the numbers
$b_{ij} = {2\form{\beta_i}{\beta_j}}/{\form{\beta_i}{\beta_i}}$ where 
$i=-1, j \geq 0$ or vice-versa.

Now: (i) $\form{\beta_{-1}}{\beta_j} = \form{s_0\,\beta_{-1}}{\gamma_j} = \form{\alpha_0}{\alpha_j}$ for $j \geq 1$, since $s_0\,\alpha_{-1} = \alpha_0 + \alpha_{-1}$ and $\alpha_{-1}$ is orthogonal to all roots of $\overline{Y}$. (ii) $|\beta_{-1}|^2 = |\alpha_{-1}|^2 = |\alpha_0|^2$. This gives us: $b_{j,-1} = a_{j0}$ and $b_{-1,j} = a_{0j}$ for $j \geq 1$.
Finally, we compute: $\form{\beta_{-1}}{\beta_0} = \form{\alpha_{-1}}{s_0(\alpha_0 + \delta_Y)}$. But $s_0(\alpha_0 + \delta_Y) = -\alpha_0 + \delta_Y = \theta$,  where $\theta$ is the highest long (respectively short) root of $\overline{Y}$ if $Y$ is untwisted (respectively twisted). But $\form{\alpha_{-1}}{\theta} =0$ since as before $\alpha_{-1}$ is orthogonal to all roots of $\overline{Y}$. In other words $b_{0,-1} =b_{-1,0} =0$.

The Dynkin diagram $S(B)$ is thus obtained from $X = S(A)$ by removing the edge between vertices $0$ and $-1$, and instead connecting the vertex $-1$ to every neighbour of $0$ with the same edge labels, i.e., such that   $b_{j,-1} = a_{j0}$ and $b_{-1,j} = a_{0j}$.

\subsection{Principle B:} Let $X$ be the Dynkin diagram of a symmetrizable GCM $A$ and let $Y$ denote a subset of its vertices such that $Y$ forms a subdiagram of affine type. We set $r=1$ if $Y$ is untwisted, $r=3$ if $Y$ if of type $D_4^{(3)}$ and $r=2$ for all other twisted types. Let $\delta_Y$ denote the null root of the diagram $Y$. In the general principle, we choose $\Lambda = Y$. For each $p \in Y$, fix a non-negative integer $k_p$; if $\alpha_p$ is a long root of $Y$, we require further that $r | k_p$ (for $Y$ of type $A_{2n}^{(2)}$ this requirement only applies to the longest root length). Let $\beta_p = \alpha_p + k_p \, \delta_Y$ and define
$\Sigma =\{\beta_p: p \in Y\}$; this is a \pisystem of type $Y$ in $Y$. For $q \not\in Y$, let $\beta_q = \alpha_q$ and define $\Sigma' =\{\beta_q: q \not\in Y\}$. Then, by the general principle, $\Sigma \cup \Sigma'$ is a \pisystem in $X$. Let $B=(b_{ij})_{i,j \in X}$ denote its type. As above, $b_{ij} = a_{ij}$ whenever $i,j$ are both in $Y$ or both not in $Y$. To compute $b_{pq}$ and $b_{qp}$ for $p \in Y, q \not\in Y$, we have:
\[\form{\beta_p}{\beta_q} = \form{\al_p}{\alpha_q} + k_p\form{\delta_Y}{\alpha_q}\]
Hence $b_{pq} = a_{pq} + k_p\frac{2\form{\delta_Y}{\alpha_q}}{\form{\alpha_p}{\alpha_p}}$ and $b_{qp} = a_{qp} + k_p\frac{2\form{\delta_Y}{\alpha_q}}{\form{\alpha_q}{\alpha_q}}$.
These can be explicitly computed in each case of interest.

While we will have occassion to use this principle in its full generality, we give below some special instances of it which occur often. Since $Y$ is affine, we assume that the vertices of $Y$ have the standard labelling $0,1, \cdots, n$ as in \cite[Chapter 4]{kac}. Suppose $X\backslash Y$ contains only a single vertex (labelled $-1$) which is connected by a single edge to the vertex $0$ of $Y$.  

(i) First let us suppose that $Y$ is untwisted. Fix $p$ such that $1 \leq p \leq n$. Choose $k_p=1$ and $k_s=0$ for all $0 \leq s \leq n$, $s \neq p$.
We only need to compute $b_{ij}$ for $i=-1, j \geq 0$ or vice-versa. Now, clearly $b_{-1,j} = a_{-1,j}$ and $b_{j,-1}=a_{j,-1}$ for $j \geq 0$, $j \neq p$. Further,
\[\form{\beta_p}{\beta_{-1}} = \form{\al_p}{\alpha_{-1}} + \form{\delta_Y}{\alpha_{-1}} = \form{\alpha_0}{\alpha_{-1}} = -\frac{|\alpha_0|^2}{2}\]

Since $|\beta_i|^2=|\alpha_i|^2$ for all $i$, we conclude that  $b_{-1,p} =-|\alpha_0|^2/|\alpha_{-1}|^2 = -1$ and $b_{p,-1} =-|\alpha_0|^2/|\alpha_p|^2$. Now since $\alpha_0$ is a long root of $Y$, we obtain
\[b_{p,-1} =\begin{cases} -1 & \text{ if } \alpha_p \text{ is a long root of } Y \\
-2 & \text{ if } Y \neq G_2^{(1)}, \text{ and } \alpha_p \text{ is a short root of } Y \\
-3 & \text{ if } Y=G_2^{(1)}, \text{ and } \alpha_p \text{ is a short root of } Y
\end{cases}
\]
In terms of Dynkin diagrams, the diagram $S(B)$ coincides with $S(A)$ except that there is a single, double or triple edge joining vertices $-1$ and $p$ (with an arrow pointing towards $p$) depending on the three cases above.

\smallskip
(ii) If $Y$ is twisted, fix a vertex $1 \leq p \leq n$ and define (i) $k_s =0$ for $0 \leq s \leq n$, $s \neq p$ (ii) $k_p =r$ if $\alpha_p$ is a long root (longest root in case of $A_{2n}^{(2)}$) and $k_p=1$ otherwise. As above we have: (a) $b_{ij} = a_{ij}$ for $i,j \neq p$, (b) $b_{ij} = a_{ij}$ for $i,j \neq -1$, (c) $b_{p,-1}=-1$ and (d) $b_{-1,p} = -|\alpha_p|^2/|\alpha_0|^2$. 
Since $\alpha_0$ is a short root of $Y$, we have:

\[b_{-1,p} =\begin{cases} -1 & \text{ if } \alpha_p \text{ is not a long root of } Y \\
-2 & \text{ if } Y \neq D_4^{(3)}, \text{ and } \alpha_p \text{ is a long root of } Y \\
-3 & \text{ if } Y=D_4^{(3)}, \text{ and } \alpha_p \text{ is a long root of } Y
\end{cases}
\]
As before, this implies that the diagram $S(B)$ coincides with $S(A)$ except that there is a single, double or triple edge joining vertices $-1$ and $p$ (with an arrow pointing away from $p$) depending on the three cases above.

\smallskip
(iii) If instead of $1 \leq p \leq n$, we choose the vertex $p=0$ in (i) or (ii) above, we obtain $b_{0,-1} = b_{-1,0} = -2$, and $b_{ij} = a_{ij}$ for all other pairs $(i,j)$. In the Dynkin diagram $S(B)$, this would be denoted by a double edge between vertices $0$ and $-1$, marked with two arrows, one pointing toward each vertex.

\subsection{}
For principles {\bf C, D, E}, we let $X$ denote the Dynkin diagram of any symmetrizable GCM.

\smallskip
\noindent
    {\bf Principle C:} (Shrinking)  Suppose $I$ is a subset of the vertices of $X$ such that $I$ forms a (connected) subdiagram of Finite type. It is well known that  $\beta_{\bullet} = \sum_{i \in I} \al_i$ is a root of $\lieg(I)$. Since $I$ is of finite type, this root is real. In the general principle, we choose the subset $\Lambda = I$ and the \pisystem $\Sigma = \{\beta_{\bullet}\}$. Let $\Sigma'= \{\al_j: j \not\in I\}$.
Let $B$ denote the GCM of $\Sigma \cup \Sigma'$. We have for $j \not\in I$,
$$\frac{\form{\beta_{\bullet}}{\alpha_j}}{\form{\alpha_j}{\alpha_j}} = \sum_{i\in I} \frac{\form{\alpha_i}{\alpha_j}}{\form{\alpha_j}{\alpha_j}}$$ 
Further, letting $k_i = |\alpha_i|^2/|\beta_{\bullet}|^2 $ for  $i \in I$, we have
$$\frac{\form{\beta_{\bullet}}{\alpha_j}}{\form{\beta_{\bullet}}{\beta_{\bullet}}} = \sum_{i \in I} k_i\frac{\form{\alpha_i}{\alpha_j}}{\form{\alpha_i}{\alpha_i}}$$ 
Thus,
\begin{equation} \label{eq:bullet-weights}
  b_{j\bullet}=\sum_{i \in I} a_{ji}, \;\;\; b_{\bullet j}=\sum_{i \in I} k_i\,a_{ij}
  \end{equation}

We note that $k_i$ is the ratio of root lengths in a finite type diagram, and is therefore one of $\frac{1}{3}, \frac{1}{2}, 1, 2, 3$.
If no two vertices of $I$ have a common neighbour $j \not\in I$, then the Dynkin diagram $S(B)$ may be thought of as being obtained from $X$ by contracting the vertices of $I$ to a single ``fat'' vertex $\bullet$. The edges in $X$ between $i\in I$ and $j \not\in I$ are now drawn between $\bullet$ and $j$ in $S(B)$ (with possibly new edge weights). The rest of the diagram $X$ is carried over unchanged.

\medskip
\noindent
    {\bf Principle D:} (Deletion) If we delete any subset of vertices from the vertex set of $X$ and define $\Sigma$ to be the set of remaining $\{\alpha_i\}$, then $\Sigma$ is a \pisystem in $X$. Its Dynkin diagram is clearly a subdiagram of $X$.

\medskip
\noindent
    {\bf Principle E:}

    (i)  Let the vertices of $X$ be labelled $1, 2, \cdots, n$. Suppose $X$ contains a subdiagram of finite type $B_2$, i.e., there are vertices $p,q$ in $X$ joined by a double bond directed (say) towards $p$. In other words, $a_{pq}=-2, a_{qp}=-1$. In the general principle, we take $\Lambda$ to be this subdiagram of type $B_2$ and define $\Sigma =\{\beta_p, \beta_q\}$ to be the \pisystem of type $A_1 \times A_1$ in $\Lambda$ given by:
    \[ \beta_p =s_p(\alpha_{q})=\alpha_{q}+2\alpha_p, \;\;\;\; \beta_q = \alpha_q.\]
    
    Define $\beta_j = \alpha_j$ for $1 \leq j \leq n, \,j\neq p,q$ and let $\Sigma'$ be the set of these $\beta_j$.
    Let $B$ denote the GCM of $\Sigma \cup \Sigma' = \{\beta_i: 1 \leq i \leq n\}$; clearly $b_{ij} = a_{ij}$ for $i,j \neq p$.
Now,
\[\frac{\form{\beta_p}{\beta_j}}{\form{\beta_j}{\beta_j}} = \frac{\form{\alpha_{q}}{\alpha_j}}{\form{\alpha_j}{\alpha_j}} + 2\frac{\form{\alpha_p}{\alpha_j}}{\form{\alpha_j}{\alpha_j}}, \text{ i.e., }
 b_{jp}=a_{jq} + 2a_{jp}\]
Since $|\alpha_{q}|^2 = 2|\alpha_{p}|^2$, we have
\[\frac{\form{\beta_p}{\beta_j}}{\form{\beta_p}{\beta_p}} = \frac{\form{\alpha_{q}}{\alpha_j}}{\form{\alpha_{q}}{\alpha_{q}}} + 2\frac{\form{\alpha_p}{\alpha_j}}{2\form{\alpha_p}{\alpha_p}} \text{ i.e, }
b_{pj}=a_{qj} + a_{pj}\]
Note in particular that since $\Sigma$ has type $A_1 \times A_1$, we have $b_{pq} = b_{qp} =0$, i.e., the double edge between $p,q$ in $X$ has been removed in $S(B)$.

\smallskip
\noindent
(ii) Now suppose the Dynkin diagram $X$ has a subdiagram of finite type $G_2$, i.e.,  there are vertices $p,q$ in $X$ joined by a triple bond directed towards $p$.
As above, choose $\Lambda$ to be this subdiagram of type $G_2$ and define $\Sigma =\{\beta_p, \beta_q\}$ to be the \pisystem of type $A_2$ in $\Lambda$ given by:
$$ \beta_p = s_p(\alpha_{q})=\alpha_{q}+3\alpha_p, \;\;\; \beta_q = \alpha_q.$$
Choose $\Sigma'$ as above, to consist of all the simple roots $\alpha_i$ of $X$ other than $i=p,q$. A similar computation establishes that $b_{jp}=a_{jq} + 3a_{jp}, \; b_{pj}=a_{qj} + a_{pj}$ and $b_{ij} = a_{ij}$ for all other pairs  $(i,j)$.
Note in particular that since $\Sigma$ is of type $A_2$, one has $b_{pq} = b_{qp} =-1$, i.e., the triple edge between $p,q$ in $X$ has now been replaced by a single edge in $S(B)$.

\smallskip
\noindent
    (iii)  Suppose $X$ contains a subdiagram of type $A_2^{(2)}$, i.e., there are vertices $p,q$ in $X$ with $a_{pq}=-4, a_{qp}=-1$ (depicted in the Dynkin diagram by four bonds directed towards $p$). We choose $\Sigma =\{\beta_p, \beta_q\}$ to be the \pisystem of type $A_1^{(1)}$ in $\Lambda$ given by:
    \[ \beta_p =s_p(\alpha_{q})=\alpha_{q}+4\alpha_p, \;\;\;\; \beta_q = \alpha_q.\]
Reasoning as before, we deduce $b_{jp}=a_{jq} + 4a_{jp}, \; b_{pj}=a_{qj} + a_{pj}$ and $b_{ij} = a_{ij}$ for all other pairs  $(i,j)$.
Here, since $\Sigma$ has type $A_1^{(1)}$, the quadruple edge from $q$ to $p$ has been replaced by a two-way double edge.

\medskip

\medskip
\section{\bf{Non-Maximal Hyperbolic Diagrams}}\label{sec:nonmax}
As an interesting application of the principles of the previous section, we now explicitly demonstrate how to construct linearly independent \pisystems of type $B$ in $A$ for various pairs $(A, B)$ of hyperbolic \pisystems.

\subsection{} In Tables I, II, III 
	 we have listed all the 142 symmetrizable hyperbolic Dynkin diagrams in ranks 3-10. We will denote by $\Gamma_k$ the hyperbolic Dynkin diagram occurring with serial number $k$ in these tables. These diagrams are taken from Tables 1--23 of \cite{lisa-et-al} which contain the full list of 238 hyperbolic diagrams without the assumption of symmetrizability. The diagram $\Gamma_k$ occurs as item number $k$ in Tables 1--23 of \cite{lisa-et-al}. Since we only consider the 142 symmetrizable hyperbolic diagrams rather than all 238 of them, there are ``gaps'' in the serial numbers that occur in our tables.

The entries in our tables contain the following information: for each serial number $k$, the second column is the corresponding Dynkin diagram, the third column is another serial number, say $\ell$ such that $\Gamma_k \preceq \Gamma_\ell$ and the fourth column indicates the principle(s) used to construct a \pisystem of type $\Gamma_k$ in $\Gamma_\ell$. We note that $\ell$ is not unique in general, but since our primary goal is to identify the maximal diagrams relative to $\preceq$, we will be content with finding one value of $\ell$.

The diagrams $\Gamma_k$ for which we are unable to find a suitable $\ell$ using any of our principles are candidates for maximal elements. We show in \S\ref{sec:max-proof} that each of these diagrams is indeed maximal. The entries corresponding to these diagrams are indicated by `Max' in the third column while the fourth column contains the value of the determinant of the GCM of the diagram.
 
In this section we give a few examples to illustrate the Principles A-E developed in the previous section. The other entries of the table may be verified by similar arguments.

\medskip
    
{\bf Principle A:} Taking $X = \Gamma_{219}$ and $Y = F_4^{(1)}$ in principle $A$, we obtain a \pisystem of type $\Gamma_{207}$ in $\Gamma_{219}$. Similarly, choosing $X = \Gamma_{159}$ and $Y = G_2^{(1)}$, we obtain  $\Gamma_{150} \preceq \Gamma_{159}$.

\medskip

{\bf Principle B:} Let $X = \Gamma_{159}$,  $Y=G_2^{(1)}$ and $\alpha_p$ be the long simple root of $G_2$. Applying principle $B$ allows us to construct a \pisystem of type $\Gamma_{129}$ in $\Gamma_{159}$. Similarly, taking $X = \Gamma_{160}$, $Y$ to be the twisted affine diagram $D_4^{(3)}$ and $\alpha_p$ to be the short simple root of $G_2$, we conclude that $\Gamma_{130} \preceq \Gamma_{160}$.

\medskip

{\bf Principle C:} Principle $C$ allows us to shrink diagrams in a specified manner. For instance, one readily obtains from this principle that:
$\Gamma_{222} \preceq \Gamma_{226} \preceq \Gamma_{231} \preceq \Gamma_{236}$.

\medskip
{\bf Principle D:} Typically the deletion principle $D$ is used in conjunction with one of the other principles. For instance, first applying principle $B$ to $X=\Gamma_{163}$, $Y=D_3^{(2)}$ and $p=0$ (i.e., the affine simple root of $Y$) one obtains the rank 4 diagram obtained from $\Gamma_{163}$ by replacing its single edge by the two-way double edge $\Longleftrightarrow$. Now applying principle $D$ to delete the node at the other end gives us $\Gamma_{106}$.

\medskip
    {\bf Principle E:} This principle only applies when the ambient diagram has a double, triple or quadruple edge. For example, an application of this principle shows $\Gamma_{220} \preceq \Gamma_{218}$, $\Gamma_{161} \preceq \Gamma_{160}$ and $\Gamma_{90} \preceq \Gamma_{123}$.

We close this subsection with the example of $\Gamma_{223} \succeq \Gamma_{212}$  which requires a sequential application of the three principles $B, C$ and $E$:

\medskip
\noindent
\begin{tikzpicture}[scale=.19]

  \begin{scope}[shift={(-5,0)}]
  \draw (5,0) node[below] {} circle (.3cm);
  \draw (8,0) node[below] {} circle (.3cm);
  \draw (11,0) node[below] {} circle (.3cm);
  \draw (14,0) node[below] {} circle (.3cm);
  \draw (17,0) node[below] {} circle (.3cm);
  \draw (20,0) node[below] {} circle (.3cm);
  \draw (11,3) node[below] {} circle (.3cm);
  
  \draw (5.3,0) -- +(2.4,0);
  \draw (8.3,0) -- +(2.4,0);
  \draw (11.3,0) -- +(2.4,0);
  \draw (11,0.3) -- +(0,2.4);
  \draw (14.3,0) -- +(2.4,0);
  \draw (17.5,0.15) -- +(2.25,0);
  \draw (17.5,-0.15) -- +(2.25,0);
  
  \draw[thick] (17.3,0) -- +(.4,.4);
  \draw[thick] (17.3,0) --+ (.4,-.4);
\end{scope}
\begin{scope}[shift = {(19,0)}]
  \draw (0,1) node{$\stackrel{B}{\succeq}$};
  \draw (5,0) node[below] {} circle (.3cm);
  \draw (8,0) node[below] {} circle (.3cm);
  \draw (11,0) node[below] {} circle (.3cm);
  \draw (14,0) node[below] {} circle (.3cm);
  \draw (17,0) node[below] {} circle (.3cm);
  \draw (20,0) node[below] {} circle (.3cm);
  \draw (11,3) node[below] {} circle (.3cm);
  
  \draw (5.3,0) -- +(2.4,0);
  \draw (8.3,0) -- +(2.4,0);
  \draw (11.3,0) -- +(2.4,0);
  \draw (11,0.3) -- +(0,2.4);
  \draw (14.3,0) -- +(2.4,0);
  \draw (17.5,0.15) -- +(2.25,0);
  \draw (17.5,-0.15) -- +(2.25,0);
  
  \draw[thick] (17.3,0) -- +(.4,.4);
  \draw[thick] (17.3,0) --+ (.4,-.4);

  \draw    (5.25,-0.15) to[out=-30,in=-150] (19.75,-0.15);
  \draw    (5,-0.3) to[out=-30,in=-150] (20,-.3);
  
  \draw[thick] (12.3,-2.4) -- +(.4,.4);
  \draw[thick] (12.3,-2.4) --+ (.4,-.4);

\end{scope}

\begin{scope}[shift = {(43,0)}]
    \draw (0,1) node{$\stackrel{C}{\succeq}$};
  \draw (5,0) node[below] {} circle (.3cm);
  \draw (11,0) node[below] {} circle (.3cm);
  \draw (14,0) node[below] {} circle (.3cm);
  \draw (17,0) node[below] {} circle (.3cm);
  \draw (20,0) node[below] {} circle (.3cm);
  \draw (11,3) node[below] {} circle (.3cm);
  
  \draw (5.3,0) -- +(5.4,0);
  \draw (11.3,0) -- +(2.4,0);
  \draw (11,0.3) -- +(0,2.4);
  \draw (14.3,0) -- +(2.4,0);
  \draw (17.5,0.15) -- +(2.25,0);
  \draw (17.5,-0.15) -- +(2.25,0);
  
  \draw[thick] (17.3,0) -- +(.4,.4);
  \draw[thick] (17.3,0) --+ (.4,-.4);

  \draw    (5.25,-0.15) to[out=-30,in=-150] (19.75,-0.15);
  \draw    (5,-0.3) to[out=-30,in=-150] (20,-.3);
  
  \draw[thick] (12.3,-2.4) -- +(.4,.4);
  \draw[thick] (12.3,-2.4) --+ (.4,-.4);

\end{scope}

\begin{scope}[shift = {(63,0)}]
    \draw (4,1) node{$\stackrel{E}{\succeq}$};
  \draw (8,0) node[below] {} circle (.3cm);
  \draw (11,0) node[below] {} circle (.3cm);
  \draw (14,0) node[below] {} circle (.3cm);
  \draw (17,0) node[below] {} circle (.3cm);
  \draw (20,0) node[below] {} circle (.3cm);
  \draw (11,3) node[below] {} circle (.3cm);

  \draw (8.3,0) -- +(2.4,0);
  \draw (11.3,0) -- +(2.4,0);
  \draw (11,0.3) -- +(0,2.4);
  \draw (14.5,0.15) -- +(2.25,0);
  \draw (14.5,-0.15) -- +(2.25,0);
  
  \draw[thick] (14.3,0) -- +(.4,.4);
  \draw[thick] (14.3,0) --+ (.4,-.4);

  \draw    (8.25,-0.15) to[out=-30,in=-150] (19.75,-0.15);
  \draw    (8,-0.3) to[out=-30,in=-150] (20,-.3);
  
  \draw[thick] (13.3,-2) -- +(.4,.4);
  \draw[thick] (13.3,-2) --+ (.4,-.4);

\end{scope}

\end{tikzpicture}

\subsection{The exceptions : principle (*)}

As mentioned above, for each non-maximal diagram $\Gamma_k$, Principles A-E can typically be used to exhibit a diagram $\Gamma_\ell$ such that $\Gamma_k \preceq \Gamma_\ell$. However, there are four non-maximal diagrams which are not directly amenable to any of these principles.
We give below special constructions in these cases.

\smallskip
\noindent

(i)  $\Gamma_{91} \preceq \Gamma_{157}$: Consider the Dynkin diagram $\Gamma_{157}$:

\medskip
\begin{center}
 \begin{tikzpicture}[scale=.3]
    \draw (8,0) node[below] {$\alpha_1$} circle (.3cm);
      \draw (11,0) node[below] {$\alpha_2$} circle (.3cm);
      \draw (14,0) node[below] {$\alpha_3$} circle (.3cm);
      \draw (11,3) node[right] {$\alpha_4$} circle (.3cm);

      \draw (8.5,0.15) -- +(2.25,0);
      \draw (8.5,-0.15) -- +(2.25,0);
      
      \draw[thick] (8.3,0) -- +(.4,.4);
      \draw[thick] (8.3,0) --+ (.4,-.4);

      \draw (11.5,0.15) -- +(2.25,0);
      \draw (11.5,-0.15) -- +(2.25,0);
      
      \draw[thick] (11.3,0) -- +(.4,.4);
      \draw[thick] (11.3,0) --+ (.4,-.4);

      \draw(11.15,0.25) -- +(0,2.25);
      \draw(10.85,0.25) -- +(0,2.25);

      \draw[thick] (11,2.7) -- +(-.4,-.4);
      \draw[thick] (11,2.7) --+ (.4,-.4);

  \end{tikzpicture}
\end{center}

The \pisystem $\Sigma=\{\alpha_1 + \alpha_2, \alpha_3, \alpha_1 + \alpha_2 + 2\alpha_4  \}$ is of type $\Gamma_{91}$.

\smallskip
\noindent

(ii)  $\Gamma_{158} \preceq \Gamma_{191}$:  Consider the Dynkin diagram $\Gamma_{191}$:

\medskip
\begin{center}
 \begin{tikzpicture}[scale=.3]
    \draw (8,0) node[below] {$\alpha_1$} circle (.3cm);
      \draw (11,0) node[below] {$\alpha_2$} circle (.3cm);
      \draw (14,0) node[below] {$\alpha_3$} circle (.3cm);
      \draw (17,0) node[below] {$\alpha_4$} circle (.3cm);
      \draw (11,3) node[right] {$\alpha_5$} circle (.3cm);

      \draw (8.2,0.2) -- +(2.3,0);
      \draw (8.2,-0.2) -- +(2.3,0);
      \draw[thick] (10.7,0) -- +(-.4,-.4);
      \draw[thick] (10.7,0) --+ (-.4,.4);

      \draw (11.3,0) -- +(2.4,0);
      \draw (11,0.3) -- +(0,2.4);

      \draw (14.2,0.2) -- +(2.3,0);
      \draw (14.2,-0.2) -- +(2.3,0);
      \draw[thick] (16.7,0) -- +(-.4,-.4);
      \draw[thick] (16.7,0) --+ (-.4,.4);

  \end{tikzpicture}
\end{center}
The \pisystem $\Sigma=\{\alpha_1, \alpha_1 + 2\alpha_2, \alpha_5 + \alpha_2+\alpha_3, \alpha_4\}$ is of type $\Gamma_{158}$.

 \smallskip
 \noindent

 (iii)  $\Gamma_{172} \preceq \Gamma_{160}$: Consider the Dynkin diagram $\Gamma_{160}$:

\medskip

\begin{center}
 
 \begin{tikzpicture}[scale=.3]
    \draw (8,0) node[below] {$\alpha_1$} circle (.3cm);
      \draw (11,0) node[below] {$\alpha_2$} circle (.3cm);
      \draw (14,0) node[below] {$\alpha_3$} circle (.3cm);
      \draw (17,0) node[below] {$\alpha_4$} circle (.3cm);
      
      \draw (8.3,0) -- +(2.4,0);
      \draw (11.3,0) -- +(2.4,0);
      \draw (14.5,0.2) -- +(2.3,0);
      \draw (14.5,-0.2) -- +(2.3,0);
      \draw (14.3,0) -- +(2.4,0);
      
      \draw[thick] (14.3,0) -- +(.4,.4);
      \draw[thick] (14.3,0) --+ (.4,-.4);
  \end{tikzpicture}
\end{center}

The \pisystem $\Sigma=\{\alpha_1+\alpha_2+\alpha_3, \; \alpha_4, \; \alpha_4+3\alpha_3,\;  \alpha_2\}$ is of type $\Gamma_{172}$.

 \smallskip
 \noindent
 (iv)  $\Gamma_{214} \preceq \Gamma_{218}$: Consider the Dynkin diagram $\Gamma_{218}$:

\medskip
\begin{center}
 \begin{tikzpicture}[scale=.3]
    \draw (8,0) node[below] {$\alpha_1$} circle (.3cm);
      \draw (11,0) node[below] {$\alpha_2$} circle (.3cm);
      \draw (14,0) node[below] {$\alpha_3$} circle (.3cm);
      \draw (17,0) node[below] {$\alpha_4$} circle (.3cm);
      \draw (20,0) node[below] {$\alpha_5$} circle (.3cm);
      \draw (23,0) node[below] {$\alpha_6$} circle (.3cm);
      
      \draw (8.3,0) -- +(2.4,0);
      \draw (11.3,0) -- +(2.4,0);
      \draw (14.3,0) -- +(2.4,0);
      \draw (20.3,0) -- +(2.4,0);
      \draw (17.5,0.15) -- +(2.25,0);
      \draw (17.5,-0.15) -- +(2.25,0);
      
      \draw[thick] (17.3,0) -- +(.4,.4);
      \draw[thick] (17.3,0) --+ (.4,-.4);
  \end{tikzpicture}
\end{center}
The \pisystem $\Sigma=\{\alpha_1, \alpha_2, \alpha_5 + 2\alpha_4+2\alpha_3, \alpha_6, \alpha_5, \alpha_4\}$ is of type $\Gamma_{214}$.

\section{Maximal Hyperbolic diagrams}\label{sec:max-proof}
In this section, we consider the 22 symmetrizable hyperbolic diagrams $\Gamma_k$ which cannot be exhibited as \pisystems of other diagrams using Principles A-E. Such diagrams only exist in ranks 3, 4, 6 and 10 and there are 5, 9, 5 and 3 such diagrams (respectively) in those ranks. We will prove that these are all in fact maximal diagrams relative to the partial order $\preceq$. As mentioned in \S\ref{sec:nonmax}, the entries corresponding to these diagrams are labelled `Max' in the third column and contain the determinant of their GCMs in the fourth.

\subsection{Rank 10} Since $\det \Gamma_{238} =-1$, it is maximal by the determinant criterion (lemma~\ref{lem:det-multiple}). The same lemma shows that $\Gamma_{236}$ and $\Gamma_{237}$ are not $\preceq$ comparable. Both these latter diagrams have two root lengths, while $\Gamma_{238}$ has only one, so the root length criterion (lemma~\ref{lem:rlr}) shows that neither of them can be $\preceq \Gamma_{238}$. Thus all three are maximal diagrams of rank 10.

\subsection{Rank 6} Since $\Gamma_{218}$ and $\Gamma_{219}$ have determinant $-1$, they are both maximal among rank 6 diagrams by the determinant criterion. The root length criterion ensures that neither of these is $\preceq \Gamma_{238}$, so to show maximality of these two diagrams, it only remains to prove that neither of them can be realized as \pisystems of $\Gamma_{236}$ or $\Gamma_{237}$. But this follows readily from corollary~\ref{cor:da1}.

Diagrams $\Gamma_{216}$ and $\Gamma_{217}$ have three root lengths. By the root length criterion they cannot be realized as \pisystems of any of the rank 10 maximal diagrams or of the other candidate diagrams $\Gamma_k$ ($k=215, 218, 219$) in rank 6. Since each of these two diagrams have determinant $-2$, they are mutually incomparable by the determinant criterion. This establishes maximality of $\Gamma_{216}$ and $\Gamma_{217}$.

Finally to show maximality of $\Gamma_{215}$, we observe that it cannot be realized as a \pisystem of:
(i) $\Gamma_k$ for $k=236, 237$ by corollary~\ref{cor:da1}
(ii) $\Gamma_{238}$ by the root length criterion
(iii) $\Gamma_k$ for $k=216, 217$ by the determinant criterion
(iv) $\Gamma_{218}$ by corollary~\ref{cor:da2}
(v) $\Gamma_{219}$ by lemma~\ref{lem:da2-hyp}.

\subsection{Rank 4} Since $\det \Gamma_{159} = \det \Gamma_{160} =-1$, they are maximal amongst rank 4 diagrams. Since both these diagrams are triply laced, they contain a pair of simple roots $\alpha_i, \alpha_j$ such that $|\alpha_i|^2/|\alpha_j|^2 = 3$. However none of the maximal diagrams in rank 6 or 10 have triple edges, so the root length criterion ensures that neither of $\Gamma_{159}, \Gamma_{160}$ occur as \pisystems of those diagrams. Hence $\Gamma_{159}$ and $\Gamma_{160}$ are maximal.

The root length criterion shows that $\Gamma_{173}$ is maximal since it contains 4 root lengths. It also shows that none of the $\Gamma_k$ for $166 \leq k \leq 170$ can be realized as \pisystems of $\Gamma_{159}$ or $\Gamma_{160}$ or of any of the maximal diagrams of ranks 6 or 10. Since $\det \Gamma_k = -2$ or $-3$ for $166 \leq k \leq 170$, the determinant criterion implies they are pairwise incomparable. This establishes their maximality.

Finally to show maximality of $\Gamma_{171}$, we observe that it cannot be realized as a \pisystem of:
(i) any of the maximal diagrams of rank 6 or 10, by the root length criterion 
(ii)  $\Gamma_k$ for $166 \leq k \leq 170$, by the determinant criterion
(iii) $\Gamma_{160}$ by corollary~\ref{cor:ta1}
(iv) $\Gamma_{159}$ by lemma~\ref{lem:ta1-hyp}.

\subsection{Rank 3}  The determinant criterion ensures that $\Gamma_k$, $117 \leq k \leq 121$ are pairwise incomparable. By the root length criterion, these diagrams cannot be realized as \pisystems of any diagram of rank $\geq 4$. Thus, they are all maximal.

\subsection{Remarks}\label{sec:rems-dualization} This completes the verification that all 22 candidate diagrams in ranks 3-10 are in fact maximal.

\newpage
\centerline{TABLE I: Hyperbolic diagrams of ranks 3, 4}
\begin{tikzpicture}
  \node at (0,0) {\resizebox{4.25in}{4in}{\includegraphics*[viewport=5 5 720 720 ]{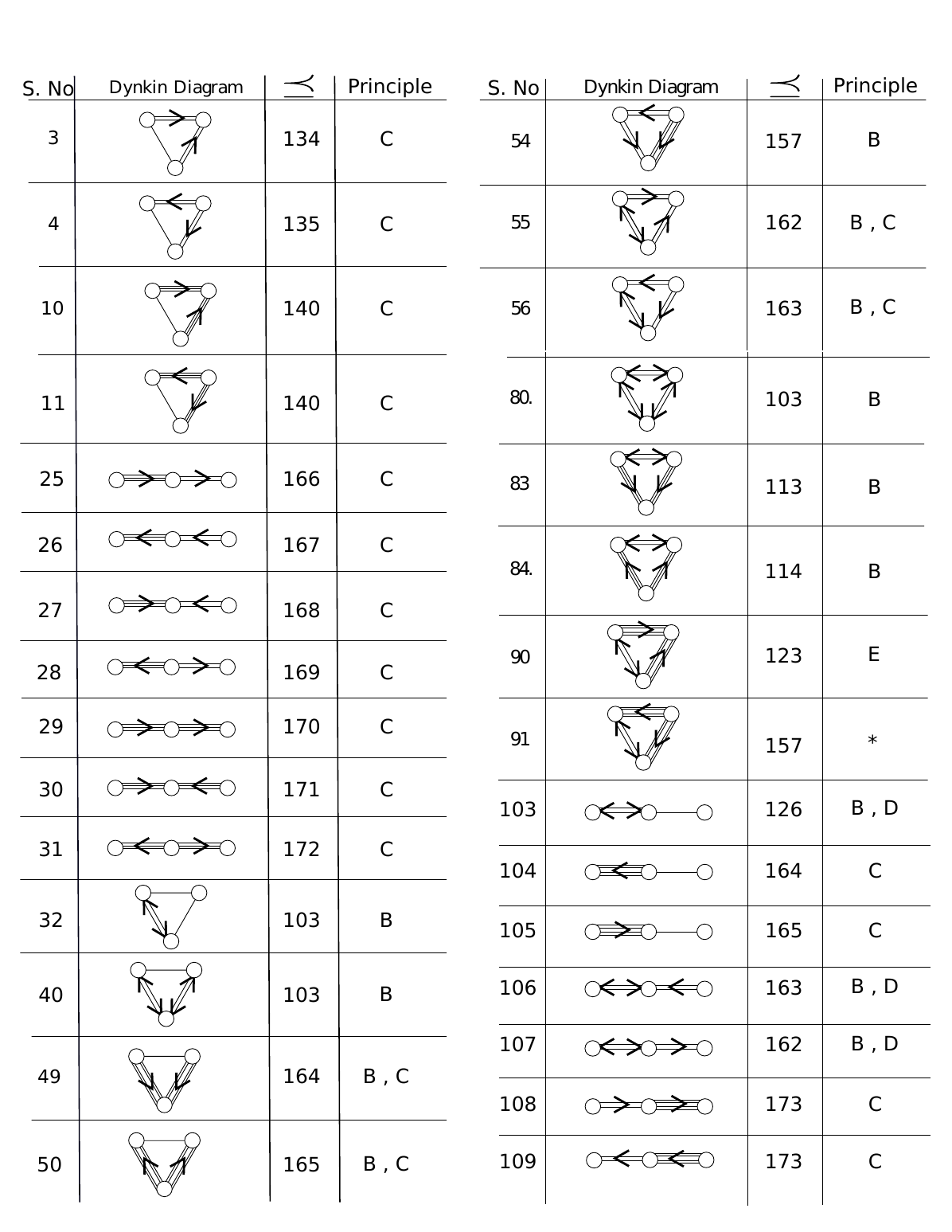}}};
\node at (9,3) {\resizebox{4.25 in}{1.8in}{\includegraphics*[viewport=5 400 720 720 ]{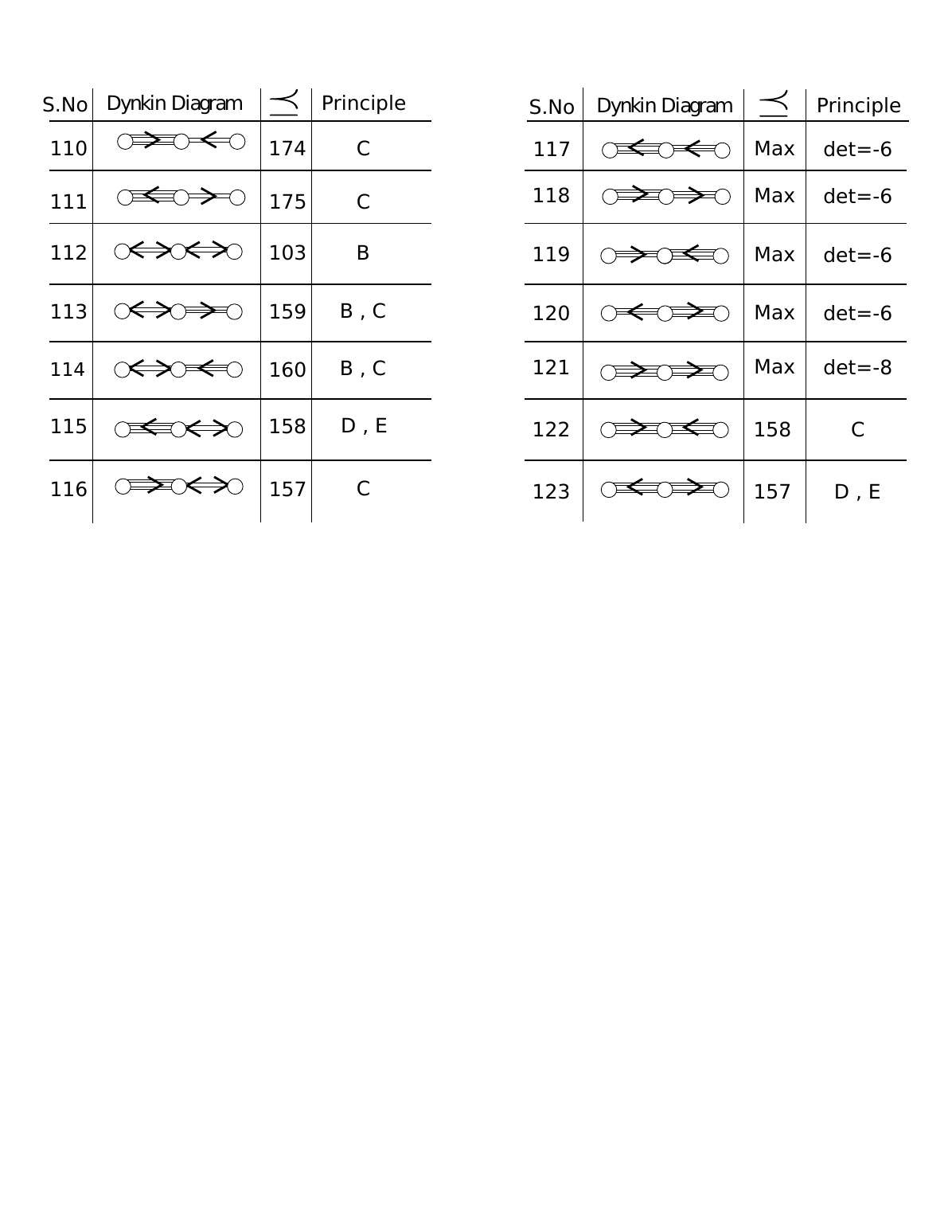}}};
\node at (0,-10) {\resizebox{4.25in}{4 in}{\includegraphics*[viewport=5 35 720 720 ]{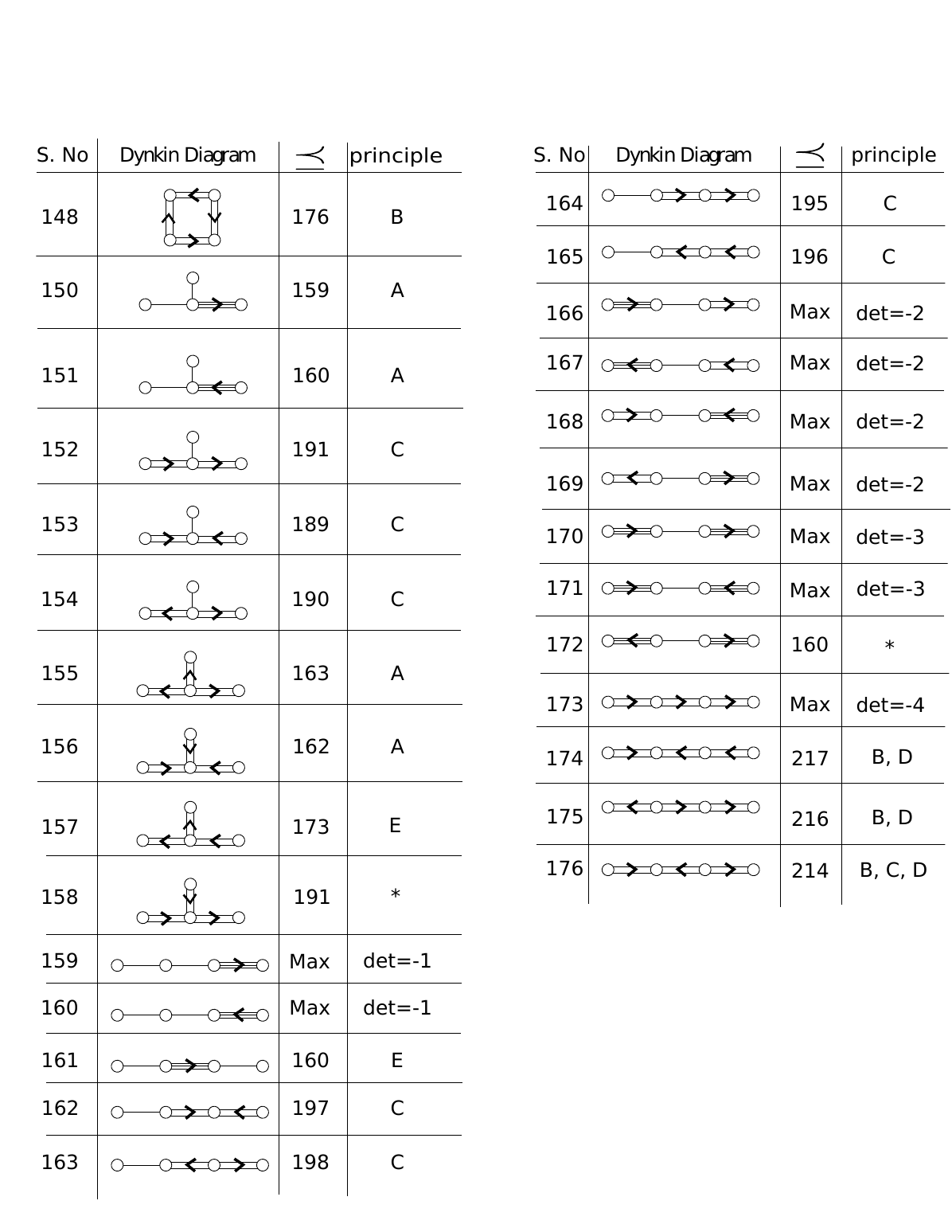}}};
\node at (9,-8.35) {\resizebox{4.25 in}{2 in }{\includegraphics*[viewport=5 275 720 720 ]{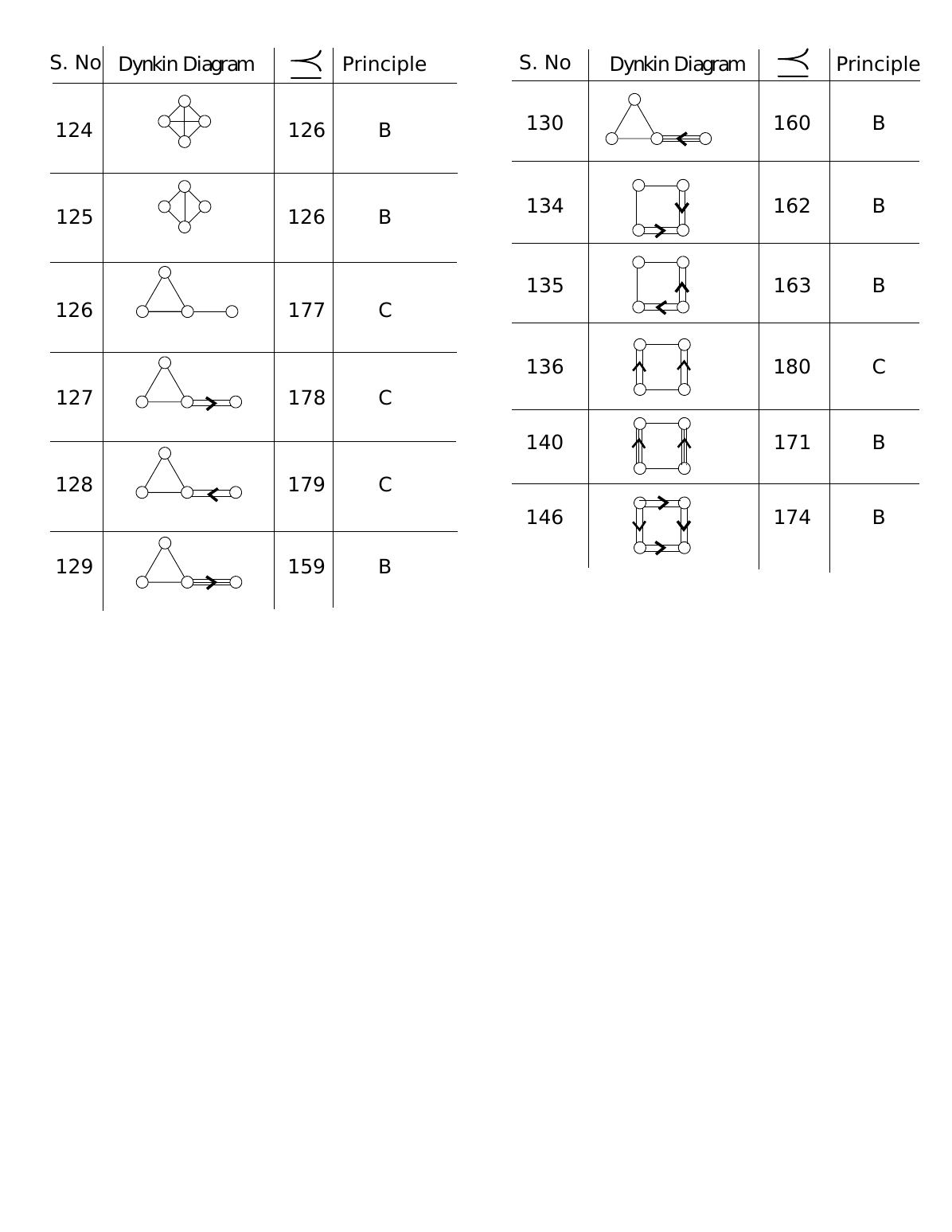}}};
\end{tikzpicture}

\newpage

\begin{tikzpicture}
\node at (3,29) {\text{TABLE II: Hyperbolic diagrams of ranks 5-8}};
  \node at (0,25) {\resizebox{4.25in}{4 in }{\includegraphics*[viewport=5 35 720 720 ]{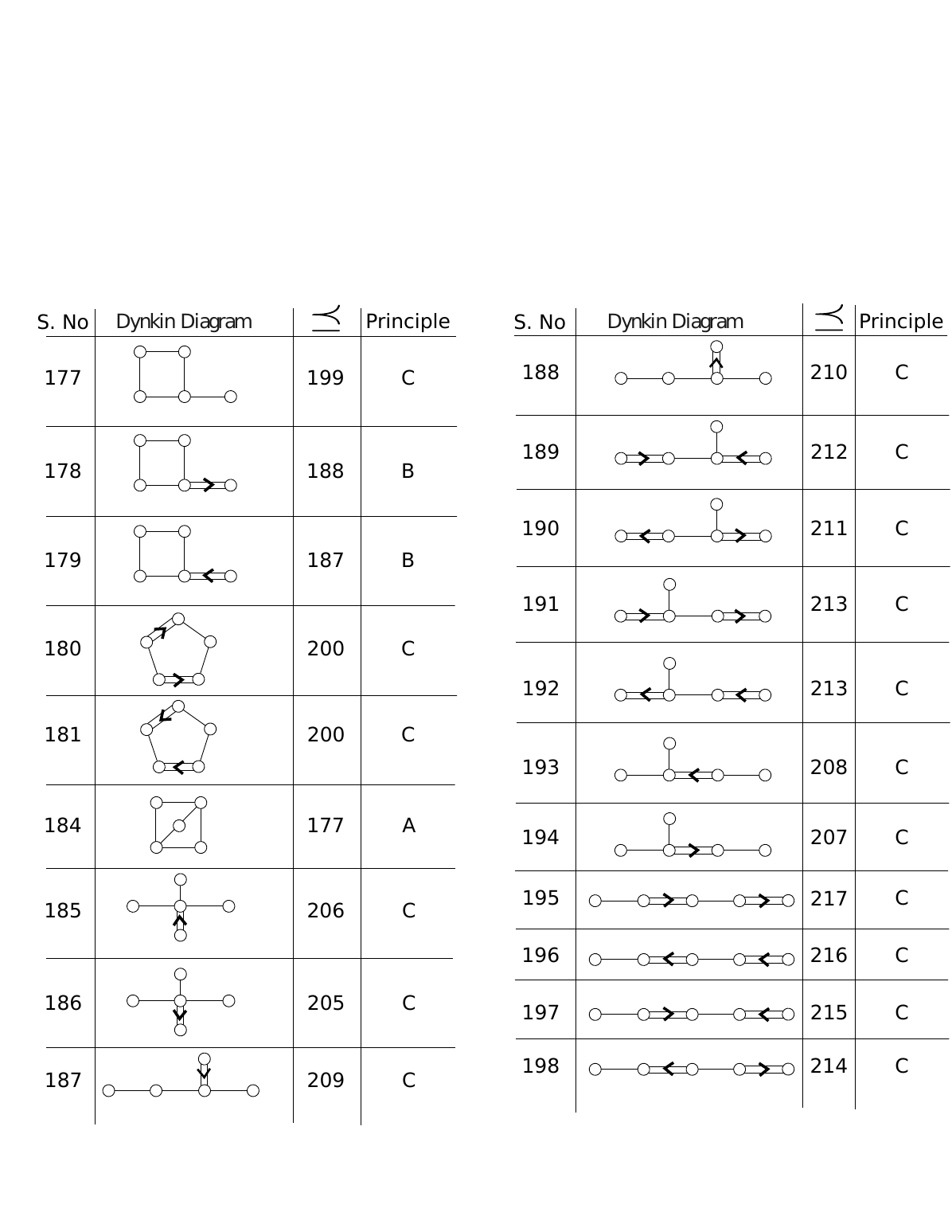}}};
  \node at (9,23.75) {\resizebox{4.25in}{3.75 in }{\includegraphics*[viewport=5 35 720 720 ]{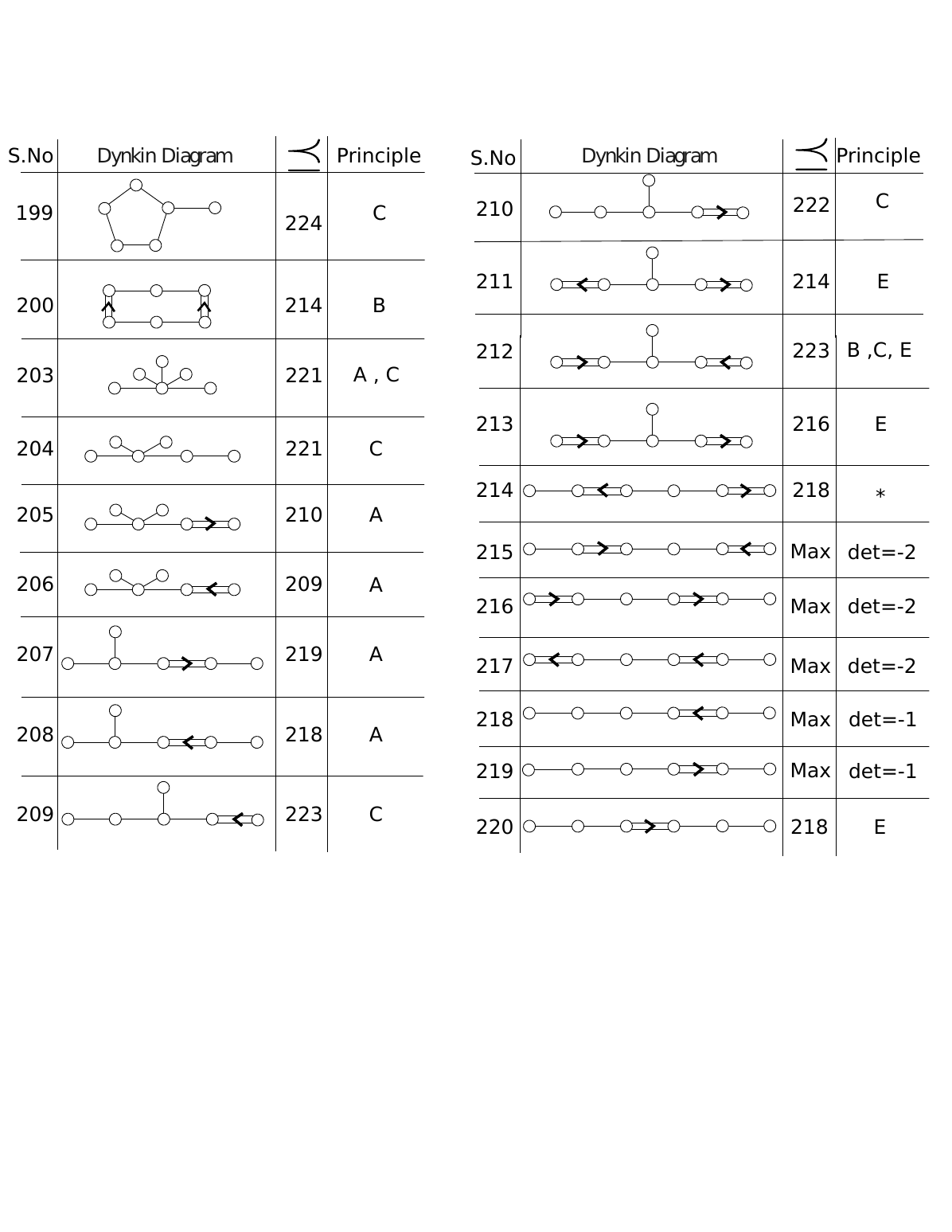}}};
\node at (0,18) {\resizebox{4.25in}{1.8in}{\includegraphics*[viewport=5 400 720 720 scale=0.05]{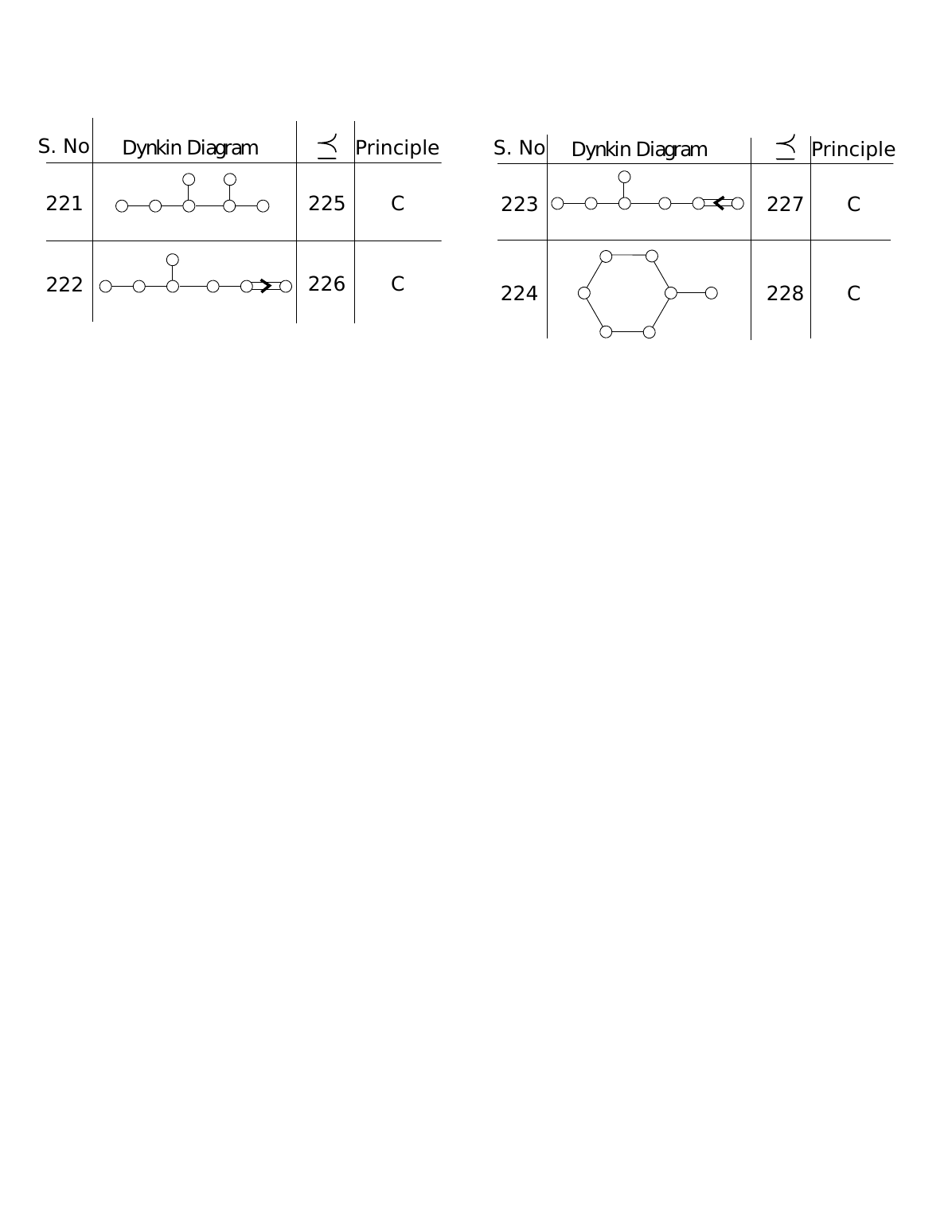}}};
\node at (9, 17.85) {\resizebox{4.25in}{1.65 in }{\includegraphics*[viewport=5 460 720 720 ]{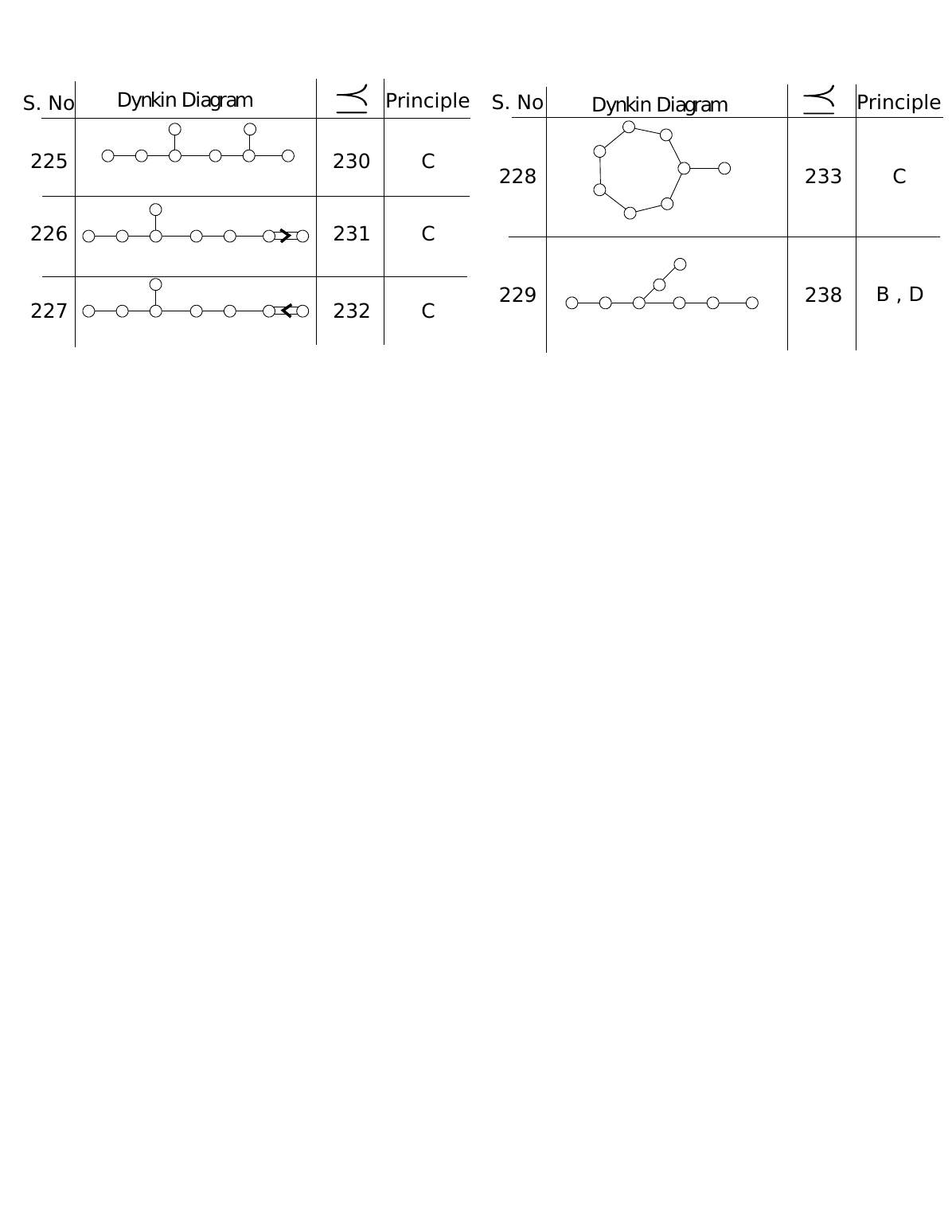}}};
\node at (3,16) {\text{Table III: Hyperbolic diagrams of ranks 9, 10}};
\node at (0,14) {\resizebox{4.25in}{1.65 in }{\includegraphics*[viewport=5 460 720 720 ]{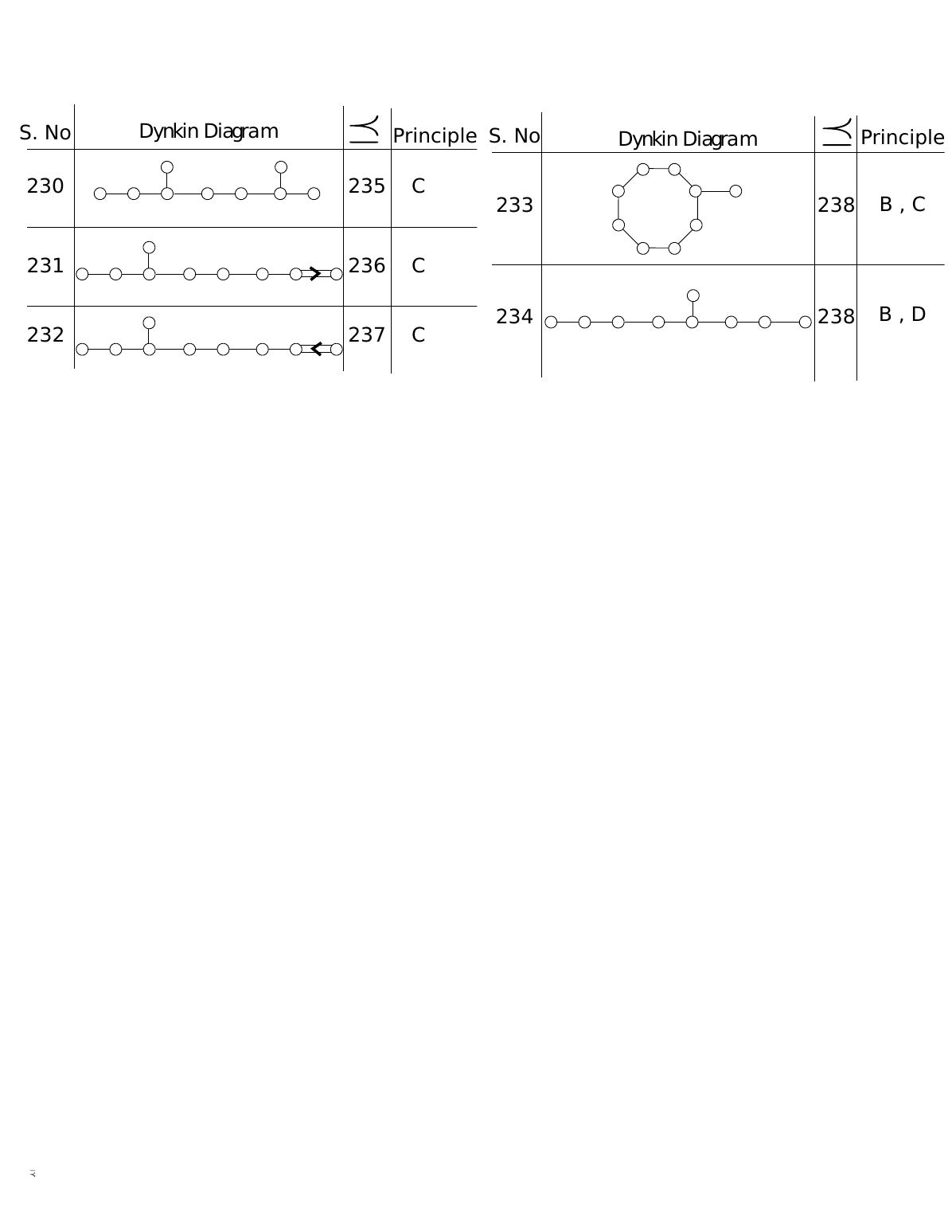}}};
\node at (9,14) {\resizebox{4.25in}{1.65 in }{\includegraphics*[viewport=5 400 720 720 ]{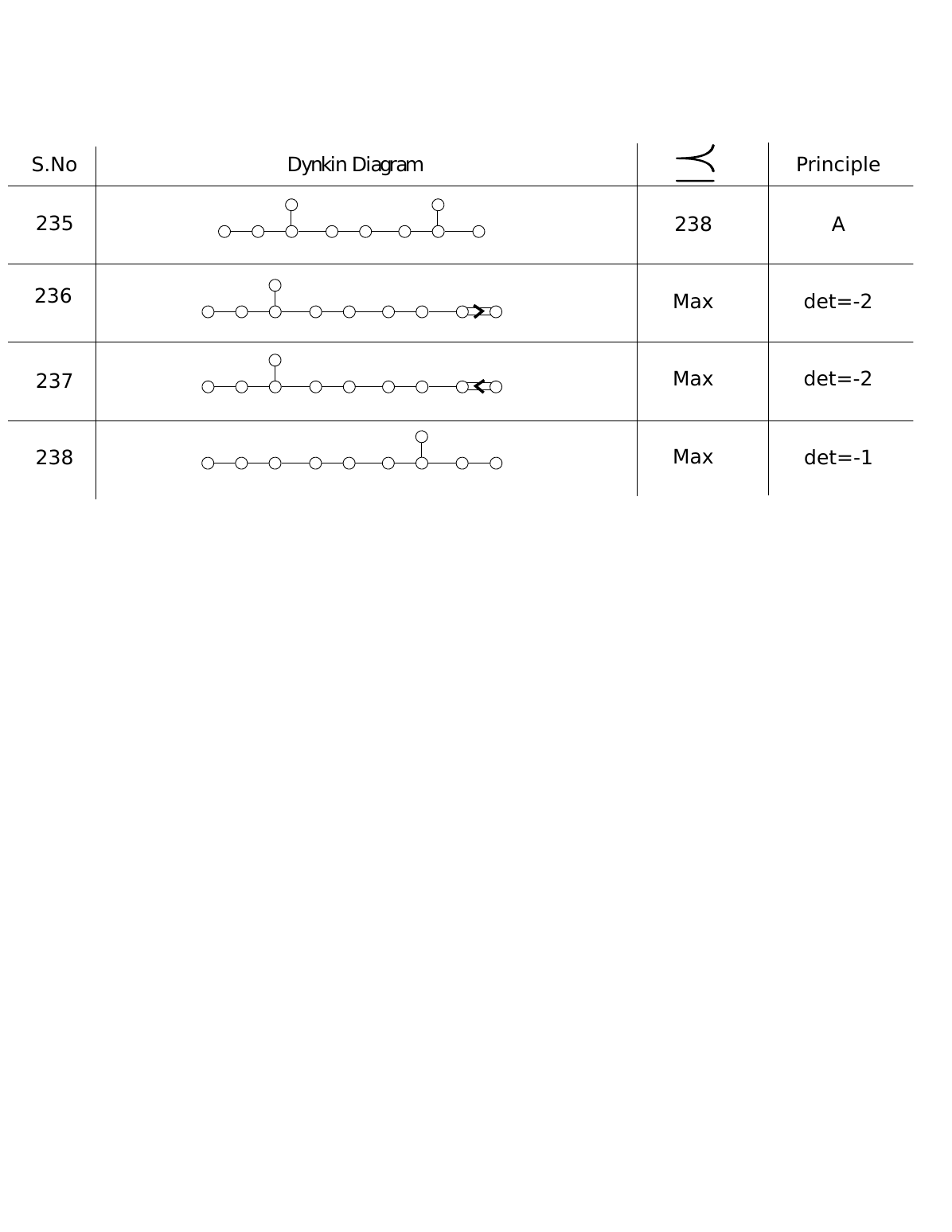}}};
\end{tikzpicture}

\bibliographystyle{plain}
\bibliography{pisystems}
\end{document}